\newtheorem{Theorem}{Theorem}[section]
\newtheorem{Lemma}[Theorem]{Lemma}
\newtheorem{Proposition}[Theorem]{Proposition}
\newtheorem{Definition}[Theorem]{Definition}
\theoremstyle{definition}
\newtheorem{Remark}[Theorem]{Remark}
\newcommand{\KK}{{\mathbb K}}
\newcommand{\PP}{{\mathbb P}}
\newcommand{\GG}{{\mathbb G}}
\newcommand{\HH}{{\mathcal H}}
\newcommand{\UU}{{\mathcal U}}
\newcommand{\QQ}{{\mathcal Q}}
\newcommand{\A}{{\mathcal A}}
\newcommand{\BB}{{\mathcal B}}
\newcommand{\OO}{{\mathcal O}}
\newcommand{\rk}{\operatorname{rk}}
\newcommand{\Hom}{\operatorname{Hom}}
\newcommand{\End}{\operatorname{End}}
\newcommand{\im}{\operatorname{Im}}
\newcommand{\Seg}{\operatorname{Seg}}
\author[E. Arrondo]{Enrique Arrondo}
\address{Universidad Complutense de Madrid, Fac. de CC. Matemáticas, Plaza de Ciencias 3, 28040, Madrid, Spain}
\email{arrondo@mat.ucm.es}
\author[S. Marchesi]{Simone Marchesi}
\address{Universidad Complutense de Madrid, Fac. de CC. Matemáticas, Plaza de Ciencias 3, 28040, Madrid, Spain\\
Università degli Studi di Milano, Dipartimento di Matematica, via Saldini 50, 20133, Milano, Italy}
\email{smarchesi@mat.ucm.es, simone.marchesi@unimi.it}
\subjclass[]{}
\keywords{Steiner bundle, Schwarzenberger bundle, Grassmannian}
\thanks{The authors thank the Spanish Ministry of Education for funding the research project MTM2009-06964, in whose framework this work has been developed.}
\begin{document}
\title{Jumping pairs of Steiner bundles}
\date{}
\maketitle
\begin{abstract}
In this work we introduce the definition of Schwarzenberger bundle on a Grassmannian. Recalling the notion of Steiner bundle, we generalize the concept of jumping pair for a Steiner bundle on a Grassmannian. After studying the jumping locus variety and bounding its dimension, we give a complete classification of Steiner bundle with jumping locus of maximal dimension, which all are Schwarzenberger bundles.
\end{abstract}
\section*{Introduction} 
Steiner bundles over projective spaces were first defined in 1993 (see \cite{dolg-kap}) by Dolgachev and Kapranov as particular vector bundles of rank $n$ in $\PP^n$, although the first particular case was introduced in 1961 (see \cite{schw}) by Schwarzenberger. In 2000 (see \cite{val1}), Vall\`{e}s proves a result that characterizes when a Steiner bundle $F$ can be described also as a Schwarzenberger, studying a particular family of hyperplanes $\{H_i\}$, which are called \emph{unstable hyperplanes}, a technique also used in 2001(see \cite{anc-ott}) by Ancona and Ottaviani. In parallel to a paper of Vall\`{e}s (see \cite{val2}) in this direction, finally in \cite{arr} there is a full generalization to higher rank of the notion of Schwarzenberger bundle and the one of unstable hyperplane for a Steiner bundle $F$, now called \emph{jumping hyperplane}. The main result is that Steiner bundles whose jumping locus has maximal dimension can be classified and are Schwarzenberger bundles. In this paper we follow the suggestion in \cite{arr} for the notion of Steiner bundle on Grassmannians, which fits in the general definition  by Mir\'{o}-Roig and Soares for an arbitrary variety (see \cite{soar-miro}). Then we recover all the results of that paper in this new situation.

In Section \ref{sec-Steiner}, we will state the general definition of a Steiner bundle for Grassmannians (see Definition \ref{def-Steiner}). 
After showing a geometrical interpretation of the definition, we will then give a lower bound for the possible ranks of the bundles just defined (see Theorem \ref{thm-rank}).

In Section \ref{sec-Schw} we will give the definition of Schwarzenberger bundle (see Definition \ref{def-Schw}), generalizing the one given in \cite{arr}.

In Section 3 we will then give the definition of jumping pair for a Steiner bundle and also give an algebraic structure to the set of all of them (see Definition \ref{def-jump}). We will bound the dimension of its locus through the description of its tangent space, which will give us information of the jumping locus as a projective variety (see Theorem \ref{thm-tang}).

Finally, in Section 4, we will classify the Steiner bundles whose jumping locus has maximal dimension and we will get that also in this case they are Schwarzenberger bundles (see Theorem \ref{thm-class}).

\section{Steiner bundles on $\GG(k,n)$}\label{sec-Steiner}
In this section we will set the notation we will use throughout this work and we will recall the definition of a Steiner bundle.\\
Let $\KK$ be an algebraically closed field with $\mbox{char} \:\KK = 0$ and let $V$ be a vector space over $\KK$. Let us construct the projective space $\PP^n = \PP(V)$ as the equivalence classes of the hyperplanes of $V$, or equivalently the equivalence classes of the lines of the dual vector space $V^*$. In this setting we define the projective grassmannian as
$$
\GG(k,\PP(V)) = \GG(k,n),
$$
or, vectorially, as $G(k+1,V^*)$, i.e. the set of the $(k+1)$-linear subspaces of $V^*$.
\begin{Definition}\label{def-Steiner}
Let $S,T$ be two vector spaces over $\KK$, respectively $s$ and $t$-dimensional.\\
We will call an $(s,t)$-Steiner bundle, over $\GG(k,n)$, the vector bundle defined by the resolution 
$$
0 \longrightarrow S \otimes \UU \longrightarrow T \otimes \OO_{\GG} \longrightarrow F \longrightarrow 0,
$$
where $\UU \rightarrow \GG(k,n)$ denotes the universal bundle of rank $k+1$ on the grassmannian and $\OO_\GG \rightarrow \GG(k,n)$ denotes the trivial line bundle.
\end{Definition}
First of all notice that the rank of the bundle we have constructed is $\rk F = t - s(k+1)$. Furthermore, observe that the given definition agrees with the following one proposed by Miró-Roig and Soares, because the pair of vector bundles $(\UU,\OO_\GG)$ is strongly exceptional. 
\begin{Definition}[\cite{soar-miro}]
A vector bundle $F$ on a smooth irreducible algebraic variety $X$ is called a Steiner vector bundle if it is defined by an exact sequence of the form
$$
0 \longrightarrow E_0^s \longrightarrow E_1^t \longrightarrow E \longrightarrow 0,
$$
where $s,t \geq 1$ and $(E_0, E_1)$ is an ordered pair of vector bundles satisfying the two following conditions:
\begin{description}
\item[(i)] $(E_0,E_1)$ is strongly exceptional;
\item[(ii)] $E_0^\lor \otimes E_1$ is generated by its global sections.
\end{description}
\end{Definition}
In order to define a Steiner bundle on $\GG(k,n)$, we could have also used the strongly exceptional pair $(\OO_\GG,\QQ)$; indeed, the two possible definitions are strongly related as we will see in the proof of Theorem \ref{teo-classtriv}.\\
We wonder which are the possible ranks for a Steiner bundle on $\GG(k,n)$, because such problem was solved for projective spaces by Dolgachev and Kapranov in \cite{dolg-kap}, where they prove that the rank of a non trivial Steiner bundle over $\PP^n$ is at least $n$.
\begin{Theorem}\label{thm-rank}
Let $F$ be a Steiner bundle over $\GG(k,n)$; then it will have rank
$$\rk F \geq \min((k+1)(n-k), (n-k)\dim S).$$
\end{Theorem}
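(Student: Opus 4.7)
The plan is to reinterpret the defining sequence as linear-algebra data on $V$ and then to bound $\rk F$ by confronting the kernel of the associated global map with the tensor-rank stratification of $V^*\otimes S$.

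Since $\UU$ is the tautological $(k{+}1)$-subbundle of $V^*\otimes\OO_\GG$, a standard Borel--Weil--Bott computation (or the dual of $0\to\UU\to V^*\otimes\OO_\GG\to\QQ\to 0$) gives $H^0(\UU^\vee)=V$. Hence the morphism $\phi\colon S\otimes\UU\to T\otimes\OO_\GG$ is equivalent to the datum of a single linear map
$$\Phi\colon V^*\otimes S\longrightarrow T,$$
whose fibre at a point $\Lambda\in\GG(k,n)=G(k{+}1,V^*)$ is the restriction $\Phi|_{\Lambda\otimes S}\colon\Lambda\otimes S\to T$. Since $F$ is locally free, $\phi$ must be injective on every fibre, so $K:=\ker\Phi\subset V^*\otimes S$ must meet $\Lambda\otimes S$ only at $0$ for every $\Lambda\in G(k{+}1,V^*)$.

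The key observation is that an element $x\in V^*\otimes S$ lies in some $\Lambda\otimes S$ with $\dim\Lambda=k+1$ if and only if the associated linear map $S^*\to V^*$ has rank at most $k+1$: enlarging the image of $x$ to a $(k{+}1)$-dimensional subspace of $V^*$ gives the nontrivial direction. Therefore $K$ must be disjoint (outside $0$) from the classical determinantal cone
$$X_{k+1}=\{x\in V^*\otimes S \mid \rk x\le k+1\},\qquad \dim X_{k+1}=(k+1)(n+s-k).$$
I now split into two cases on $s=\dim S$. If $s\le k+1$, then every tensor already has rank at most $s\le k+1$, so $X_{k+1}=V^*\otimes S$ and $K=0$; thus $\Phi$ is injective, $t\ge(n+1)s$, and $\rk F=t-(k+1)s\ge(n-k)s$. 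If $s\ge k+1$, then $X_{k+1}$ is a proper cone and the standard incidence principle (two closed subvarieties of $\PP^N$ with $\dim Y+\dim Z\ge N$ must meet) applied to $\PP(K)$ and $\PP(X_{k+1})$ in $\PP(V^*\otimes S)$ yields
$$\dim K+\dim X_{k+1}\le (n+1)s,\qquad\text{so}\qquad \dim K\le (n-k)(s-k-1).$$
Consequently $t\ge \rk\Phi=(n+1)s-\dim K\ge (n+1)s-(n-k)(s-k-1)$, and a direct expansion shows that $\rk F=t-(k+1)s\ge(k+1)(n-k)$. The two cases together give the claimed bound $\rk F\ge \min((k+1)(n-k),(n-k)s)$.

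The main obstacle I expect is the estimate in the case $s\ge k+1$: it rests on two well-known but essential ingredients, namely the dimension formula for the determinantal variety of $(n{+}1)\times s$ matrices of bounded rank, and the incidence lemma that disjoint subvarieties of $\PP^N$ satisfy $\dim Y+\dim Z\le N-1$. Once these are granted, the translation $\phi\leftrightarrow \Phi$, the rank characterization of tensors lying in some $\Lambda\otimes S$, and the final arithmetic turning $\dim K\le (n-k)(s-k-1)$ into $\rk F\ge (k+1)(n-k)$ are all elementary.
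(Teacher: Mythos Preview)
Your argument is correct and takes a genuinely different, more elementary route than the paper. The paper works on $\GG(k,n)$ itself: it applies Porteous' formula to the map $S\otimes\UU\to T\otimes\OO_\GG$, identifies the virtual degeneracy class as the degree $r{+}1$ coefficient of $c_y(\QQ)^{s}=(1+\sigma_1 y+\cdots+\sigma_{n-k}y^{n-k})^{s}$, and then invokes the positivity of products of special Schubert cycles to conclude that this class is nonzero whenever $r+1\le \min\big(s(n-k),\,(k+1)(n-k)\big)$; since a nonzero class forces a nonempty degeneracy locus, $F$ could not be a bundle, and the bound follows. Your approach instead moves the problem to the affine/projective space $\PP(V^*\otimes S)$, replacing Porteous and Schubert calculus by two classical facts: the dimension formula $\dim X_{k+1}=(k+1)(n+s-k)$ for the determinantal cone, and the incidence principle for a linear subspace meeting a projective variety. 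This is arguably cleaner and avoids any intersection theory on the Grassmannian; the paper's method, on the other hand, fits naturally into the Chern-class language already used for Steiner bundles and would generalise immediately to other degeneracy questions on $\GG(k,n)$.

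One small correction: your case split overlaps at $s=k+1$, and in that boundary case $X_{k+1}=V^*\otimes S$ is \emph{not} a proper cone, so the incidence argument as written does not apply there. This is harmless, since $s=k+1$ is already covered by your first case (giving $\rk F\ge (n-k)s=(k+1)(n-k)$, which coincides with the other bound); just phrase the second case as $s\ge k+2$.
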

\begin{proof}
Let us consider the morphism
\begin{equation} \label{eq-def}
S \otimes \UU \longrightarrow T \otimes \OO_\GG \dashrightarrow F,
\end{equation}
and let us write $r = \rk F$. We already know that $r = \dim T - (k+1)\dim S$. 
We want to study for which points of the Grassmannian such morphism is not injective and in order to do so we will use Porteous' formula. The expected codimension of such set, given by the formula, is equal to $r+1$, so if we have $\dim\GG(k,n) <r+1$ then there will be no points that drop the rank of the morphism and we can ensure its injectivity. In the case the set is not empty, let us compute the fundamental class obtained applying Porteous: if the class is empty, we will still ensure injectivity and thus have a Steiner bundle, otherwise, we will not.\\
The fundamental class can be computed as
$$
\Delta_{r+1,1}\left(c_y\left(T \otimes \OO_\GG - S \otimes \UU\right)\right) = \Delta_{r+1,1}\left(c_y\left(\frac{1}{S\otimes \UU}\right)\right) = \Delta_{r+1,1}\left(c_y\left( S\otimes \QQ \right)\right) ,
$$
where $c_y$ denotes the Chern polynomial of the bundle and, following the notation of \cite{acgh}, the symbol $\Delta_{r+1,1}$ just denotes the coefficient of the term of degree $r+1$ in the polynomial.\\
We thus need to describe the coefficients of
$$
c_y(\QQ)^{\dim S^*} = \left( 1 + \sigma_1 y + \sigma_2 y^2 + \ldots + \sigma_{n-k}y^{n-k} \right)^{\dim S^*},
$$
where each Chern class $c_i(\QQ)$ is equal to $\sigma_i$, the special Schubert cycle of codimension $i$ for the Grassmannian $\GG(k,n)$. Let us observe that, having a power of the polynomial, we need to know how the Schubert cycles intersect among each other and we know that, if we do not exceed the dimension of the Grassmannian, such intersection is always non empty, see \cite{mar-tesi}. So, if we do not exceed the dimension of the Grassmannian in the Chow ring, the intersection of special Schubert cycles is always non empty and we prove the theorem.
\end{proof}
\begin{Remark}\label{intStein}
We generalize the geometrical interpretation written in \cite{schw} and \cite{arr}. An injective morphism $\UU \rightarrow T \otimes \OO_\GG$ is equivalent to fix a $(n+1)$-codimensional space in $\Lambda \subset \PP(T)$. Notice that all hyperplanes of $\PP(T)$ which contain $\Lambda$ are in a one-to-one correspondence with the points of a linear subspace of maximal dimension, disjoint from $\Lambda$ and that generates with $\Lambda$ all $\PP(T)$. Due to the codimension we can build a one-to-one correspondence between such hyperplanes and the ambient space $\PP^n$. In the same way each injective morphism $S \otimes \UU \rightarrow T \otimes \OO_\GG$ defines $s$ subspaces $\Lambda_1,\ldots,\Lambda_s\subset \PP(T)$ of codimension $n+1$. We are now able to build several correspondences, one for each $i = 1,\ldots,s$, in order to have
$$
\PP(T)^*_{\Lambda_i} := \left\{ \mbox{hyperplanes of} \:\: \PP(T) \:\: \mbox{that contain} \:\: \Lambda_i\right\} \simeq \PP^n, \:\mbox{for each}\:i = 1,\ldots, s.
$$
Let us fix now a point $\Gamma \in \GG(k,n)$, which gives us a subspace of codimension $k+1$ in $\PP(T)$, one for each $i$, that contains $\Lambda_i$. We have thus obtained the correspondences
$$
A_i = \left\{ \mbox{subspaces of codimension} \:\: k+1 \:\: \mbox{in} \:\: \PP(T) \:\: \mbox{that contains} \:\: \Lambda_i \right\} \simeq \GG(k,n), \:\mbox{for every}\: i = 1,\ldots,s.
$$
The projectivization of the fiber $F_\Gamma$ is given by the intersection of the chosen $s$ subspaces. The projectivization of the bundle is given by the union of all $\PP(F_\Gamma)$, for each $\Gamma \in \GG(k,n)$.
\end{Remark}
We will now see an extremely important result, because it gives us an equivalent definition for a Steiner bundle in terms of linear algebra. From now on, when we consider a Steiner bundle, we will refer to any of the two proposed definitions.
\begin{Lemma} \label{lem-def}
Given $S$ and $T$ two vector spaces over $\KK$, the followings are equivalent:
\begin{description}
\item[(i)] a Steiner bundle $F$ given by the resolution
$$
0\longrightarrow S \otimes \UU \longrightarrow T \otimes \OO_{\GG} \longrightarrow F \longrightarrow 0
$$
\item[(ii)] a linear application
$$
T^* \stackrel{\varphi}{\longrightarrow} S^* \otimes H^0(\UU^\lor) = \Hom(H^0(\UU^\lor)^*,S^*)
$$
such that, for every $u_1,\ldots,u_{k+1} \in H^0(\UU^\lor)^*$ linearly independent and for every $v_1,\ldots,v_{k+1} \in S^*$, there exists an $f \in \Hom(H^0(\UU^\lor)^*,S^*)$ such that $f \in \im \varphi$ and $f(u_j) = v_j$ for each $j=1,\ldots,k+1$.
\end{description}
\end{Lemma}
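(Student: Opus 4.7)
The plan is to observe that the data of $\psi$ in (i) and of $\varphi$ in (ii) are two incarnations, under tensor-Hom adjunction, of the same element of $S^*\otimes T\otimes H^0(\UU^\lor)$, and then to rephrase the Steiner condition as a pointwise requirement on $\varphi$.

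First, I identify
$$
\Hom(S\otimes\UU,\,T\otimes\OO_\GG)\;=\;S^*\otimes T\otimes H^0(\UU^\lor)\;=\;\Hom\bigl(T^*,\,S^*\otimes H^0(\UU^\lor)\bigr),
$$
so that giving a morphism $\psi$ as in (i) is the same as giving a linear map $\varphi$ as in (ii). The short exact sequence in (i) produces a vector bundle $F$ of the expected rank $t-s(k+1)$ precisely when $\psi$ is injective on every fibre over $\GG(k,n)$, and by linear-algebra duality this is equivalent to the dual morphism
$$
\psi^\lor\colon T^*\otimes\OO_\GG\longrightarrow S^*\otimes\UU^\lor
$$
being fibrewise surjective.

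Next, I check that under the adjunction above $\psi^\lor$ factors as $\varphi\otimes\operatorname{id}_{\OO_\GG}$ followed by the evaluation $S^*\otimes H^0(\UU^\lor)\otimes\OO_\GG\to S^*\otimes\UU^\lor$. A point $\Gamma\in\GG(k,n)$ corresponds to a $(k+1)$-dimensional subspace $W\subset V^*=H^0(\UU^\lor)^*$, with $\UU_\Gamma=W$ and $\UU^\lor_\Gamma=W^*$. Choosing a basis $u_1,\dots,u_{k+1}$ of $W$, the fibre
$$
\psi^\lor_\Gamma\colon T^*\longrightarrow S^*\otimes W^*=\Hom(W,S^*)
$$
sends $t^*$ to the restriction $\varphi(t^*)|_W$, so its surjectivity amounts to the statement that every prescription of values $v_1,\dots,v_{k+1}\in S^*$ at the $u_j$ is realised by some element of $\im\varphi$. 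This is precisely condition (ii), since every linearly independent $(k+1)$-tuple in $V^*$ spans a unique such $W$ and both statements quantify over all $\Gamma\in\GG(k,n)$.

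The main book-keeping obstacle is keeping track of the several natural identifications ($\UU^\lor_\Gamma=W^*$, evaluation of a global section at $\Gamma$ versus restriction of a linear form from $V^*$ to $W$, and the adjunction interchanging $\Hom(S\otimes\UU,T\otimes\OO_\GG)$ with $\Hom(T^*,S^*\otimes H^0(\UU^\lor))$); once these are set up, both implications are a direct reformulation and the equivalence follows.
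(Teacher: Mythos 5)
Your proposal is correct and follows essentially the same route as the paper: dualize the bundle map to $\psi^\lor\colon T^*\otimes\OO_\GG\to S^*\otimes\UU^\lor$, pass to global sections to obtain $\varphi$, observe that the Steiner condition is fibrewise surjectivity of $\psi^\lor$, and translate this at a point $\Gamma$ (corresponding to a $(k+1)$-dimensional $W\subset H^0(\UU^\lor)^*$) into the interpolation condition on $\im\varphi$ that is exactly item (ii). The only difference is cosmetic: you phrase the passage between $\psi$ and $\varphi$ as a tensor-Hom adjunction on the ambient Hom-space rather than as taking global sections of the sheaf-Hom, but the content is identical.
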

\begin{proof}
Let us consider the map
$$
S \otimes \UU \longrightarrow T \otimes \OO_{\GG}
$$
and its dual one
$$
\psi : T^* \otimes \OO_\GG \longrightarrow S^* \otimes \UU^\lor = \mathcal{H}{om}(\UU,S^*\otimes \OO_\GG)
$$
that gives us the induced map on the global sections
$$
\varphi : T^* \longrightarrow S^* \otimes H^0(\UU^\lor) = \Hom(H^0(\UU^\lor)^*,S^*).
$$
We need to characterize $\varphi$ in order to have the map $\psi$ surjective, that is equivalent to ask for $\psi$ to be surjective in each fiber.\\
A point $\Gamma \in \GG(k,n)$ is in correspondence with $k+1$ independent vectors $u_1,\ldots,u_{k+1} \in H^0(\UU^\lor)^*$, so that the bundle morphism in the fiber associated to $\Gamma$ corresponds to the restriction of $\varphi$ of the type
$$
\tilde{\varphi} :  T^* \longrightarrow \Hom(<u_1,\ldots,u_k+1>,S^*).
$$
The requested characterization will be exactly the one stated in point (ii), because $\tilde{\varphi}$ is surjective for every fiber if and only if for every $k+1$ independent vectors of $H^0(\UU^\lor)^*$, related with the fixed point in the Grassmannian, and $k+1$ vectors of $S^*$, that can be chosen as image of the previous set, there exists $f \in \Hom(H^0(\UU^\lor)^*,S^*)$ with $f \in \im \varphi$ such that $f(u_j)=v_j$ for every $j$ from 1 to $k+1$. 
\end{proof}
\subsection{Steiner bundles and linear algebra} We prove some general results of linear algebra that we will discover to be very useful in the specific setting of Steiner bundles.\\
Let $U$ and $V$ be two vector spaces, of dimension respectively $r$ and $s$.\\
Let $W \subset \Hom(U,V)$ be the vector space characterized by the following property, which we will denote by $P_k$: for every $k+1$ independent vectors $\tilde{u}_1,\ldots,\tilde{u}_{k+1} \in U$ and for every $\tilde{v}_1,\ldots,\tilde{v}_{k+1} \in V$ there exists an element $f \in W$ such that $f(\tilde{u}_j) = \tilde{v}_j$, for every $j = 1,\ldots,k+1$. This is equivalent to ask that for every vector subspace $U'\subset U$, with $\dim U' = k+1$, we have the following diagram
$$
\xymatrix{
W \ar[dr]^{\alpha} \ar@{^{(}->}[d] \\
\Hom(U,V) \ar@{>>}[r] & \Hom(U',V)
}
$$
where the induced map $\alpha$ will always be surjective.\\
It is immediate to prove the following result
\begin{Lemma}\label{lem-tec3}
Let $W \subset\Hom(U,V)$ be a vector subspace that satisfies the property $P_k$. If $\dim V = k+1$, then the map $W \longrightarrow Hom(U,V)$ is also surjective.
\end{Lemma}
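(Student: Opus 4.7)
The plan is to show $W = \Hom(U,V)$ by verifying that the annihilator $W^{\perp} \subset \Hom(U,V)^*$ reduces to zero; property $P_k$ together with the hypothesis $\dim V = k+1$ will then do all the work.

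I would use the canonical identification $\Hom(U,V)^* \cong U \otimes V^*$ under which the evaluation pairing becomes $\langle u\otimes\lambda,\,f\rangle = \lambda(f(u))$. Pick any $\alpha \in W^{\perp}$ and write it in minimal (Kronecker) form
$$
\alpha = \sum_{i=1}^m u_i \otimes \lambda_i,
$$
with $u_1,\dots,u_m \in U$ linearly independent and $\lambda_1,\dots,\lambda_m \in V^*$ linearly independent, so that $m$ is the tensor rank of $\alpha$.

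The key observation, which aligns the problem perfectly with $P_k$, is that $m \leq \dim V^* = k+1$. Supposing $m \geq 1$ for contradiction, I would extend $u_1,\dots,u_m$ to a linearly independent family $u_1,\dots,u_{k+1}$ in $U$ (implicitly assuming $\dim U \geq k+1$, without which $P_k$ is vacuous), complete $\lambda_1,\dots,\lambda_m$ to a basis of $V^*$, and pick $v_1,\dots,v_{k+1}\in V$ to be the dual basis, so that $\lambda_i(v_j)=\delta_{ij}$. Property $P_k$ then furnishes $f\in W$ with $f(u_j)=v_j$ for every $j$, yielding
$$
0 = \langle \alpha, f\rangle = \sum_{i=1}^m \lambda_i\bigl(f(u_i)\bigr) = \sum_{i=1}^m \lambda_i(v_i) = m,
$$
the desired contradiction. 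Hence $W^{\perp}=0$ and $W$ fills all of $\Hom(U,V)$.

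The only conceptually delicate point is the tensor-rank bound $m \leq k+1$ forced by $\dim V^* = k+1$: this is precisely what ensures that the data $(u_i,v_i)_{i\leq m}$ can be padded out to $k+1$ prescribed pairs and fed into $P_k$. Everything else is a direct computation, which justifies the author's characterization of the result as immediate.
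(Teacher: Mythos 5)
Your proof is correct, and it takes a genuinely different route from the paper's. The paper argues directly inside $\Hom(U,V)\cong U^*\otimes V$: it picks an arbitrary $\phi$, writes it as $\sum_{j=1}^{k+1}\tilde u_j^*\otimes v_j$ against a basis of $V$, and then asserts that $P_k$ lets one realize this sum as an element of $W$. As written this is quite compressed: $P_k$ only lets you prescribe the values of some $f\in W$ on a single $(k{+}1)$-dimensional subspace $U'\subset U$, which forces $f|_{U'}=\phi|_{U'}$ but says nothing about $f$ on a complement of $U'$, so $f=\phi$ does not follow immediately. You instead dualize: you work in $\Hom(U,V)^*\cong U\otimes V^*$ and show $W^\perp=0$. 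The decisive observation is that any $\alpha\in U\otimes V^*$ has tensor rank $m\le\dim V^*=k+1$, so in minimal form $\alpha\in\langle u_1,\dots,u_m\rangle\otimes V^*$ sits inside $U'\otimes V^*$ for a suitable $(k{+}1)$-dimensional $U'$; choosing the dual basis of $V$ and invoking $P_k$ then produces $f\in W$ with $\langle\alpha,f\rangle=m\ne0$, contradicting $\alpha\in W^\perp$. This annihilator argument is exactly what makes the rank bound $\le k+1$ do its work cleanly, and it is complete where the paper's direct construction is only a sketch. Your explicit caveat that $\dim U\ge k+1$ is needed (else $P_k$ is vacuous) is also correct; the paper makes this assumption implicitly by setting $\dim U=r>k$ in the preamble to the technical lemmas.
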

\begin{proof}
Consider an element in $U^* \otimes V$, that we can also write as a linear combination $\sum \lambda_{i,j} u^*_i \otimes v_j$, given by the choice of a basis for each of the vector spaces $U^*$ and $V$. Collect all the elements of the combination using the basis of $V$, in order to write the combination as
$$
\tilde{u_1}^*\otimes v_1 + \ldots + \tilde{u}^*_{k+1} \otimes v_{k+1}.
$$
Notice that having at most $k+1$ independent $\tilde{u_i}^*$ and completing a basis of $U^*$ with independent vectors that must vanish because the rank of the chosen element is determined, we can see the previous sum as an element of $W$.
\end{proof}
We can apply the previous lemma to prove the following result
\begin{Lemma}\label{lem-dual}
Let $W \subset\Hom(U,V)$ be a vector subspace that satisfies the property $P_k$. Then, considering the induced map
$$
W \longrightarrow \Hom(V^*,U^*),
$$
we have that for every $k+1$ independent vectors $\tilde{v}^*_1,\ldots,\tilde{v}^*_{k+1} \in V^*$ and for every $\tilde{u}^*_1,\ldots,\tilde{u}^*_{k+1} \in U^*$ there exists an element $\bar{f} \in W$ such that $\bar{f}(\tilde{v}^*_j) = \tilde{u}^*_j$, for every $j = 1,\ldots,k+1$. We say that $W$ also satisfy the reciprocal of the property $P_k$.
\end{Lemma}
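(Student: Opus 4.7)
The plan is to reduce to the situation of Lemma \ref{lem-tec3} by passing to a $(k+1)$-dimensional quotient of $V$, and then dualize.

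First, given $k+1$ linearly independent covectors $\tilde v_1^*,\dots,\tilde v_{k+1}^*\in V^*$, I would use them to construct a surjection $\pi\colon V\twoheadrightarrow V''$ onto a $(k+1)$-dimensional space $V''$, in such a way that the $\tilde v_j^*$ become a basis of $(V'')^*\hookrightarrow V^*$ (concretely, take $V''=V/\bigcap_j\ker\tilde v_j^*$). Composition with $\pi$ yields a linear map
$$\pi_*\colon \Hom(U,V)\longrightarrow \Hom(U,V''),$$
and I would denote by $W''\subset \Hom(U,V'')$ the image of $W$ under $\pi_*$.

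Next I would check that $W''$ still satisfies property $P_k$. Given $k+1$ independent $\tilde u_1,\dots,\tilde u_{k+1}\in U$ and arbitrary targets $\bar v_1,\dots,\bar v_{k+1}\in V''$, lift them to $v_1,\dots,v_{k+1}\in V$ using surjectivity of $\pi$; by $P_k$ for $W$ there is $f\in W$ with $f(\tilde u_j)=v_j$, and then $\pi\circ f\in W''$ sends $\tilde u_j$ to $\bar v_j$. Since $\dim V''=k+1$, Lemma \ref{lem-tec3} now forces $W''=\Hom(U,V'')$, i.e.\ the composition
$$W\longrightarrow \Hom(U,V'')$$
is surjective.

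Finally I would translate this to the dual side. Under the natural identification $\Hom(U,V'')\cong \Hom((V'')^*,U^*)$ given by transposition, the surjectivity above says: for every linear map $g\colon(V'')^*\to U^*$ there exists $\bar f\in W$ whose transpose $\bar f^t\colon V^*\to U^*$ restricts to $g$ along the inclusion $(V'')^*\hookrightarrow V^*$. Taking $g$ to be the unique map sending the basis $\tilde v_j^*$ of $(V'')^*$ to the prescribed $\tilde u_j^*\in U^*$ gives $\bar f(\tilde v_j^*)=\tilde u_j^*$ for all $j$, which is exactly the reciprocal property $P_k$. The only step requiring care is the bookkeeping between $\Hom(U,V)$ and $\Hom(V^*,U^*)$ via transposition and the identification $(V'')^*\cong \langle\tilde v_1^*,\dots,\tilde v_{k+1}^*\rangle\subset V^*$; once this is set up cleanly, the rest follows mechanically.
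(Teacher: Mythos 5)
Your proof is correct and takes essentially the same route as the paper: pass to a $(k+1)$-dimensional quotient of $V$, apply Lemma \ref{lem-tec3} there, and dualize via $\Hom(U,V'')\cong\Hom((V'')^*,U^*)$. You are somewhat more explicit than the paper in verifying that the image $W''$ of $W$ in $\Hom(U,V'')$ still satisfies $P_k$ before invoking Lemma \ref{lem-tec3}, which is a step the paper leaves implicit.
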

\begin{proof}
Let us consider an arbitrary $(k+1)$-dimensional quotient $Q$ of the vector space $V$, so we are able to construct the following commutative diagram
$$
\xymatrix{
W \ar[r]^(.3){\varphi} \ar[rd]_{\varphi'} \ar@/^3pc/[rr]^f \ar@/_5pc/[rrd]^g & \Hom(U, V) \ar@{>>}[r] & \Hom (U,Q) \ar[d]^\simeq \\
& \Hom(V^*,U^*) \ar@{>>}[r] & \Hom(Q^*, U^*)
}
$$
We know the map $f$ to be surjective by Lemma \ref{lem-tec3}, hence we have that for every $Q^* \subset V$ with $\dim Q = k+1$ the map $g$ is also surjective. This means that the map $\varphi '$ makes $W$ satisfy also the reciprocal of the property $P_k$, because every morphism induced by $\varphi '$ given by the restriction of $V^*$ to a $(k+1)$-dimensional subspace is surjective.
\end{proof}
The lemma we just proved applies to our particular case in the following way.
\begin{Remark}
By the description of the property $P_k$, we have that $P_k$ implies $P_i$, for each $i\leq k$. Moreover, Lemma \ref{lem-dual} tells us that having a Steiner bundle $F$ on $\GG(k,n)$ is equivalent to have a Steiner bundle $\tilde{F}$ on $\GG(k,\PP(S))$. Therefore considering a Steiner bundle actually means considering a family of $2(k+1)$ Steiner bundles.
\end{Remark}
We now prove a result that will allow us to extrapolate the trivial summands from Steiner bundles and define the concept of \emph{reduced Steiner bundle} on $\GG(k,n)$, generalizing the definition given by one of the authors in \cite{arr}.
\begin{Lemma}\label{lem-rid}
With the usual notation, the followings are equivalent:
\begin{description}
\item[(i)] a linear subspace $K \subset T^*$ contained in the kernel of $\varphi$,
\item[(ii)] an epimorphism $F\longrightarrow K^* \otimes \OO_\GG$,
\item[(iii)] a splitting $F=F' \oplus \left(K^* \otimes \OO_\GG\right)$.
\end{description}
\end{Lemma}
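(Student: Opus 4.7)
The plan is to prove the cyclic chain of implications (i)$\Rightarrow$(iii)$\Rightarrow$(ii)$\Rightarrow$(i), with the substantive step being (i)$\Rightarrow$(iii). The workhorse throughout is the adjunction
$$
\Hom(S \otimes \UU,\, K^* \otimes \OO_\GG) \;\cong\; S^* \otimes K^* \otimes H^0(\UU^\lor) \;\cong\; \Hom(K,\, S^* \otimes H^0(\UU^\lor)),
$$
which identifies bundle morphisms from $S \otimes \UU$ to a trivial quotient $K^* \otimes \OO_\GG$ with linear maps $K \to S^* \otimes H^0(\UU^\lor)$. In particular, postcomposing the defining map $S \otimes \UU \to T \otimes \OO_\GG$ with the projection $T \otimes \OO_\GG \to K^* \otimes \OO_\GG$ induced by the inclusion $K \hookrightarrow T^*$ produces, under this adjunction, precisely the restriction $\varphi|_K$.

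For (i)$\Rightarrow$(iii), I would pick any vector-space complement $T_0 \subset T$ realising $T \cong K^* \oplus T_0$ (equivalently $T^* = K \oplus T_0^*$). Under the induced decomposition $T \otimes \OO_\GG = (K^* \otimes \OO_\GG) \oplus (T_0 \otimes \OO_\GG)$, the Steiner map splits as $\alpha \oplus \beta$ with $\alpha : S \otimes \UU \to K^* \otimes \OO_\GG$ and $\beta : S \otimes \UU \to T_0 \otimes \OO_\GG$. By the adjunction above, $\alpha$ corresponds to $\varphi|_K$, which vanishes by hypothesis; hence $\alpha = 0$ and the Steiner map factors through $T_0 \otimes \OO_\GG$. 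Passing to cokernels produces the desired splitting
$$
F \;=\; (K^* \otimes \OO_\GG) \,\oplus\, F', \qquad F' \;=\; (T_0 \otimes \OO_\GG)/\im \beta,
$$
with $F'$ itself a (reduced) Steiner bundle.

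The implication (iii)$\Rightarrow$(ii) is trivial by projection onto the trivial summand. For (ii)$\Rightarrow$(i), I would compose the epimorphism $F \to K^* \otimes \OO_\GG$ with the defining surjection $T \otimes \OO_\GG \to F$; the result is a surjective morphism between trivial bundles, hence comes from a surjective linear map $T \to K^*$, whose dual embeds $K$ into $T^*$. Since this morphism factors through $F$, its precomposition with $S \otimes \UU \to T \otimes \OO_\GG$ vanishes, and by the same adjunction this vanishing is exactly $\varphi|_K = 0$, i.e.\ $K \subset \ker \varphi$. I do not anticipate any serious obstacle: the only care required is in tracking the various duals correctly and in noting that the splitting of $T$ used in (i)$\Rightarrow$(iii) is non-canonical, so the resulting direct-sum decomposition of $F$ is only defined up to isomorphism, which is harmless for the statement.
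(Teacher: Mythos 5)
Your proof is correct, and your handling of (i)$\Rightarrow$(iii) is a genuinely different route from the paper's. The paper first establishes (ii)$\Leftrightarrow$(iii) on its own, using the fact that $F$ is globally generated: given an epimorphism $F\to K^*\otimes\OO_\GG$, one lifts a basis of $K^*$ to $H^0(F)$, completes it, and uses the complement to generate the complementary summand $F'$. It then proves (i)$\Rightarrow$(ii) and (ii)$\Rightarrow$(i) via $3\times 3$-type diagram chases built from the factorisation $T^*/K \to S^*\otimes H^0(\UU^\lor)$. You instead go straight from (i) to (iii) by choosing a linear complement $T_0^*$ to $K$ in $T^*$, decomposing the Steiner map as $\alpha\oplus\beta$ relative to $T\otimes\OO_\GG=(K^*\otimes\OO_\GG)\oplus(T_0\otimes\OO_\GG)$, and observing under the adjunction $\Hom(S\otimes\UU,\,K^*\otimes\OO_\GG)\cong\Hom(K,\,S^*\otimes H^0(\UU^\lor))$ that the hypothesis $\varphi|_K=0$ is precisely the vanishing of $\alpha$; since $\mu$ is fiberwise injective so is $\beta$, and taking cokernels yields $F=(K^*\otimes\OO_\GG)\oplus\mathrm{coker}(\beta)$ directly. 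This buys you something concrete: the somewhat informal lift-and-complete step the paper uses for (ii)$\Rightarrow$(iii) is replaced by a transparent linear-algebraic splitting, and the argument exhibits the complementary summand $F'=\mathrm{coker}(\beta)$ as a Steiner bundle on the nose, with (iii)$\Rightarrow$(ii) trivialised to projection. Your (ii)$\Rightarrow$(i) is essentially the paper's argument stated through the adjunction rather than by dualising a diagram, so there is no real divergence there. The only point worth making explicit when writing this up is the one you already flag: the complement $T_0$ is non-canonical, so the splitting is determined only up to isomorphism, which is all the statement asserts.
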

\begin{proof} { \ } \vspace{4mm}\\
(ii) $\Leftrightarrow$ (iii)\\
Let us observe that $F$ is generated by its global sections (that is because we have a surjective map from a vector space tensor the canonical bundle to it). Let us consider the following diagram
$$
\xymatrix{
0 \ar[r] & S \otimes \UU \ar[r] & T \otimes \OO_\GG \ar[r] \ar[dr] & F \ar[r] \ar[d]^{f} & 0 \\
&&&K^* \otimes \OO_\GG  \\
}
$$
The fact that $F$ is generated by its global sections tells us that a basis will be sent to a basis, so, let us take a basis of $H^0(F)$ and suppose that $f$ is surjective. What we need to do is to take a basis of $K^*\otimes \OO_\GG$ and take its preimage in $F$. We can complete the basis and with the elements added we can generate a vector bundle $F'$ that gives us the splitting.\\
If we already know we have a splitting, of course the map $f$ is surjective.\vspace{5mm}\\
(i) $\Rightarrow$ (ii) \vspace{3mm}\\
If (i) is true, then, because of the fact that $K \subset \ker \varphi$, we have a morphism of the form $\bar{\varphi} : T^* / K \longrightarrow S^* \otimes H^0(\UU^\lor)$. We know well by now that such morphism is associated to a Steiner bundle $F'$. This gives us the following commutative diagram, that we can construct starting from the two rows
\begin{equation}\label{diag-lemred}
\xymatrix{
& & 0 \ar[d] & 0 \ar[d] \\
0 \ar[r] & S \otimes \UU \ar[r] \ar@{=}[d]  & \left(T^* / K\right)^* \otimes \OO_\GG \ar[r] \ar@{^{(}->}[d]^\alpha & F' \ar[r] \ar[d]^\beta & 0 \\
0 \ar[r] & S \otimes \UU \ar[r]  & T \otimes \OO_\GG \ar[d] \ar[r] & F \ar[r] \ar[d]^\gamma & 0 \\
& & K^* \otimes \OO_\GG \ar@{=}[r] \ar[d]  & K^* \otimes \OO_\GG \ar[d] \\
& & 0 & 0
}
\end{equation}
The morphism $\alpha$ is injective by hypothesis and it gives us that also $\beta$ is injective. Because of the commutativity of the diagram, the map $\gamma$ must be surjective. We thus have (ii). \vspace{3mm}\\
(ii) $\Rightarrow$ (i)\\
By hypothesis we have an epimorphism $F\longrightarrow K^*\otimes \OO_\GG$ and taking the resolution of $F$ we can construct a further surjective morphism $g$ in the following way:
$$
\xymatrix{
\cdots \ar[r] &T \otimes \OO_\GG \ar@{>>}[r] \ar[dr]^g & F \ar[r] \ar@{>>}[d] &0 \\
&& K^* \otimes \OO_\GG
}
$$
Having such surjective morphism $g$ tells us that $K \subset T^*$ must be a vector subspace. Starting from the dual of the map $g$ we are able to construct another commutative diagram, the dual one of Diagram (\ref{diag-lemred}), this time starting from the two columns.\\ 
We get that the bundle morphism $\varphi : T^* \otimes \OO_\GG \longrightarrow S^* \otimes \UU^\lor$ has a factorization through  $(T^* / K ) \otimes \OO_\GG$. This means that $K$ is contained in the kernel of $\varphi$ and we get (i).
\end{proof}
\begin{Remark}\label{rmk-reduced}
The bundle $F'$, by Lemma \ref{lem-def}, is the Steiner bundle associated to the map $T^*/ K \longrightarrow S^*\otimes H^0(\UU^\lor)$. If we consider $T_0^* = \im \varphi$, we obtain an inclusion $T_0^* \hookrightarrow S^* \otimes H^0(\UU^\lor)$ associated to a Steiner bundle $F_0$. We get $H^0(F_0^\lor) =0$ and $F = F_0 \oplus \left(T / T_0 \otimes \OO_\GG\right)$. In particular, if we fix a Steiner bundle $F$, $H^0(F^\lor) = 0$ if and only if the linear map $\varphi$ is injective.
\end{Remark}
All the Steiner bundles satisfying the property stated in the previous remark form a particular subset.
\begin{Definition}\index{Steiner bundle}\index{Steiner bundle ! reduced}
Using the usual notation, a Steiner bundle $F$ on $\GG(k,n)$ is said to be reduced if and only if $H^0(F^\lor) = 0$. In general, we will denote by $F_0$ the reduced summand of a Steiner bundle $F$.
\end{Definition}
Having defined the concept of reduced Steiner bundle, we can prove a classification theorem for $(s,t)$-Steiner bundles with $s \leq k+1$.
We have the following result.
\begin{Theorem}\label{teo-classtriv}
Let $F$ be an $(s,t)$-Steiner bundle on $\GG(k,n)$ with $s \leq k+1$. Then, if $F$ is reduced, it will be of the type $F = S \otimes \QQ$; if it is not it will be of the type $F = (S \otimes \QQ) \oplus \OO_\GG^p$ for some $p>0$.
\end{Theorem}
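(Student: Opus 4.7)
The plan is to combine the linear-algebra translation in Lemma \ref{lem-def} with Lemma \ref{lem-tec3}. Using the dual tautological sequence $0 \to \QQ^\lor \to V \otimes \OO_\GG \to \UU^\lor \to 0$ together with the vanishings $H^0(\QQ^\lor) = H^1(\QQ^\lor) = 0$, I first identify $H^0(\UU^\lor) \cong V$, so that the map of Lemma \ref{lem-def} reads
$$
\varphi : T^* \longrightarrow S^* \otimes V \cong \Hom(V^*, S^*),
$$
and its image $W := \im \varphi$ is a subspace of $\Hom(V^*, S^*)$ satisfying property $P_k$. The key step is then to apply Lemma \ref{lem-tec3}: since $\dim S^* = s \leq k+1$, property $P_k$ at once implies the weaker $P_{s-1}$ (complete any $s$ linearly independent vectors of $V^*$ to a system of $k+1$ and assign arbitrary images to the extras). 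Lemma \ref{lem-tec3}, read with $s-1$ in place of $k$ and target of dimension $s$, then forces $W = V \otimes S^*$, so in particular $\dim T \geq (n+1)s$.

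Assume first that $F$ is reduced. By Remark \ref{rmk-reduced} the map $\varphi$ is also injective, hence an isomorphism $T^* \cong V \otimes S^*$. Under this identification the sheaf morphism $\psi : T^* \otimes \OO_\GG \to S^* \otimes \UU^\lor$ from the proof of Lemma \ref{lem-def} induces the identity on global sections; since the source is a direct sum of trivial line bundles, any morphism out of it is determined by its action on global sections, and therefore $\psi$ must coincide with $\mathrm{id}_{S^*} \otimes e$, where $e: V \otimes \OO_\GG \twoheadrightarrow \UU^\lor$ is the canonical evaluation. The kernel of $\psi$ is $S^* \otimes \QQ^\lor$, and dualizing recovers precisely the Steiner resolution
$$
0 \to S \otimes \UU \to S \otimes V^* \otimes \OO_\GG \to S \otimes \QQ \to 0,
$$
so $F = S \otimes \QQ$. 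If $F$ is not reduced, Remark \ref{rmk-reduced} splits off the trivial summand as $F = F_0 \oplus \OO_\GG^p$, with $F_0$ a reduced $(s,t_0)$-Steiner bundle still satisfying $s \leq k+1$; the previous paragraph gives $F_0 = S \otimes \QQ$, whence $t_0 = s(n+1)$ and $p = t - s(n+1) > 0$.

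The single step that requires genuine care is the identification $\psi = \mathrm{id}_{S^*} \otimes e$; everything else is essentially bookkeeping once Lemma \ref{lem-tec3} has been invoked. That identification rests on the elementary observation that $\Hom(\OO_\GG^{\oplus N}, \UU^\lor) = H^0(\UU^\lor)^{\oplus N}$, so a sheaf morphism between bundles of the form in play is pinned down by its induced map on global sections, and the latter is the identity by construction.
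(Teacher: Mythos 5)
Your proof is correct, and it takes a route that differs from the paper's at the decisive step. Both arguments boil down to showing $\im\varphi = S^*\otimes H^0(\UU^\lor)$, but the paper gets there by constructing the full commutative diagram built on the dual of the defining sequence, extracting from a hypothetical strict inclusion $T^*\subsetneq S^*\otimes H^0(\UU^\lor)$ a nowhere-vanishing section of $S\otimes\QQ$, and then arguing geometrically that any $s\leq k+1$ sections of $\QQ$ necessarily share a common zero on the Grassmannian. You instead stay entirely inside the linear-algebra dictionary: you observe that property $P_k$ implies $P_{s-1}$, then read Lemma \ref{lem-tec3} with $s-1$ in place of $k$ and target $S^*$ of dimension $s$ to conclude immediately that $\im\varphi$ fills $\Hom(H^0(\UU^\lor)^*,S^*)$. (This re-reading is legitimate because the lemma's proof is purely linear-algebraic and does not depend on the specific integer $k$ of the Grassmannian.) The two proofs are essentially dual to one another — the paper's nowhere-vanishing section is exactly a nonzero element of $W^\perp\subset S\otimes H^0(\UU^\lor)^*$, whose rank is at most $s$, which is the content of Lemma \ref{lem-tec3} when applied with parameter $s-1$ — but yours avoids the diagram chase and the Grassmannian-geometric digression. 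You also explicitly pin down the sheaf morphism as $\mathrm{id}_{S^*}\otimes e$ using the fact that a map out of a trivial bundle is determined by its action on global sections, which the paper leaves implicit after establishing $T^*=S^*\otimes H^0(\UU^\lor)$; that is a useful clarification.
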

\begin{proof}
From Remark \ref{rmk-reduced}, it is enough to prove the result for reduced Steiner bundles; and, in this case, we need to show that the injective map $T^*\to S^*\otimes H^0(\UU^\lor)$ is also surjective. Consider the following commutative diagram, constructed starting from the dual of the sequence that defines the bundle,
$$
\xymatrix{
& 0 \ar[d] & 0 \ar[d] \\
0 \ar[r] & F^\lor \ar[r] \ar[d] & T^* \otimes \OO_{\GG(k,n)} \ar[d] \ar[r] & S^* \otimes \UU^\lor \ar[r] \ar[d]^\simeq & 0 \\
0 \ar[r] & S^* \otimes \QQ^\lor \ar[d] \ar[r] & S^* \otimes H^0(\UU^\lor) \otimes \OO_{\GG(k,n)} \ar[r] \ar[d] & S^* \otimes \UU^\lor \ar[r] & 0 \\
& \frac{S^* \otimes H^0(\UU^\lor)}{T^*} \otimes \OO_{\GG(k,n)} \ar[r]^\simeq \ar[d] & \frac{S^* \otimes H^0(\UU^\lor)}{T^*} \otimes \OO_{\GG(k,n)} \ar[d] \\
& 0 & 0}
$$
Observe that, if $T^* \neq S^*\otimes H^0(\UU^\lor)$, the given diagram allows us to have an injective morphism of type
$$
\OO_\GG \longrightarrow (\QQ)^s \simeq S \otimes \QQ.
$$
If $s \leq k+1$ this is impossible; indeed,  a global section of $\QQ$ vanishes in all $\Lambda \in \GG(k,n)$ passing through a fixed point of $\PP^n$. Considering $s$ sections of $\QQ$ means considering all $\Lambda$'s passing through $s$ independent points of $\PP^n$. If  $s \leq k+1$ then we have at least one element with such property. Hence $T^*= S^*\otimes H^0(U^\lor)$, which completes the proof.
\end{proof}
\begin{Remark}
Observe that, from the previous diagram, the Steiner bundle $F$ can be also defined considering the strongly exceptional pair $(\OO_\GG, \QQ)$. This proves that all the theory presented in this work does not depend on the choice of the strongly exceptional pair, as we pointed out at the beginning of this section.
\end{Remark}
\section{Schwarzenberger bundles on $\GG(k,n)$}\label{sec-Schw}
In this section we propose the definition and some properties of Schwarzenberger bundles on $\GG(k,n)$, which represents the generalization of the definition given in \cite{arr} of Schwarzenberger bundle on the projective space.\\
Let us consider two globally generated vector bundles $L,M$ over a projective variety $X$, with $h^0(M) = n+1$ and with the identification $\PP^n = \PP(H^0(M)^*)$. The Schwarzenberger bundle on $\GG(k,n)$ associated to the triple $(X,L,M)$ will be the bundle defined by the composition 
$$
H^0(L) \otimes \UU \longrightarrow H^0(L) \otimes H^0(M) \otimes \OO_\GG \longrightarrow H^0(L\otimes M) \otimes \OO_\GG
$$
for which we need to require the injectivity in each fiber, in order to have a resolution.\\
This is equivalent to fix $k+1$ independent global sections $\{\sigma_1,\ldots,\sigma_{k+1}\}$ in $H^0(M)$ in correspondence to the point $\Gamma = [<\sigma_1,\ldots,\sigma_{k+1}>] \in \GG(k,n)$ and require the injectivity of the following composition, given by the multiplication with the global section subspace we fixed
$$
H^0(L) \otimes <\sigma_1,\ldots,\sigma_{k+1}> \longrightarrow H^0(L) \otimes H^0(M) \otimes \OO_\GG \longrightarrow H^0(L\otimes M) \otimes \OO_\GG
$$
\begin{Definition}\label{def-Schw}
The bundle $F= F(X,L,M)$ defined by the resolution
$$
0 \longrightarrow H^0(L) \otimes \UU \longrightarrow H^0(L\otimes M) \otimes \OO_\GG \longrightarrow F \longrightarrow 0
$$
is called a Schwarzenberger bundle on $\GG(k,n)$.
\end{Definition}
The map $\varphi$ of Lemma \ref{lem-def} will be the dual of the multiplication map $H^0(L) \otimes H^0(M) \longrightarrow H^0(L\otimes M)$. In particular $F$ is reduced if and only if the multiplication map is surjective.\\
Notice that it is not easy to find examples of Schwarzenberger bundles on $\GG(k,n)$, nevertheless, the following lemma gives us a family which ensures us that the definition is correct.
\begin{Lemma}
Consider a triple $(X,L,M)$ such that $L$ and $M$ are globally generated vector bundles of rank respectively 1 and $k+1$ over a projective variety X, with $\dim X \geq k+1$, $c_{k+1}(M) \neq 0$ and $h^0(M) = k+2$. Then, the triple defines a Schwarzenberger bundle on $\GG(k,k+1)$.
\end{Lemma}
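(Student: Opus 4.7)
The plan is to verify that the bundle morphism
$$
H^0(L) \otimes \UU \longrightarrow H^0(L \otimes M) \otimes \OO_\GG
$$
of Definition~\ref{def-Schw} is injective on every fiber over $\GG(k,k+1)$. By the fiberwise description given just before the lemma, this reduces, for a point $\Gamma = [\langle \sigma_1,\ldots,\sigma_{k+1}\rangle]$, to showing that the multiplication
$$
\mu_W : H^0(L) \otimes W \longrightarrow H^0(L\otimes M), \qquad W = \langle \sigma_1,\ldots,\sigma_{k+1}\rangle,
$$
is injective for every $(k+1)$-dimensional subspace $W \subset H^0(M)$.

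First I would identify $\ker\mu$ for the unrestricted multiplication $\mu: H^0(L)\otimes H^0(M) \to H^0(L\otimes M)$. Since $M$ is globally generated of rank $k+1$ with $h^0(M)=k+2$, the evaluation sequence
$$
0 \longrightarrow N \longrightarrow H^0(M)\otimes\OO_X \longrightarrow M \longrightarrow 0
$$
has kernel a line bundle, and comparing determinants gives $N \cong (\det M)^\vee$. Tensoring by $L$ and taking $H^0$ produces a canonical injection $H^0(L\otimes(\det M)^\vee) \hookrightarrow H^0(L)\otimes H^0(M)$ whose image is precisely $\ker\mu$. Completing $\sigma_1,\ldots,\sigma_{k+1}$ to a basis of $H^0(M)$ by adjoining some $\sigma_{k+2}$, the $i$-th coordinate of $N \hookrightarrow H^0(M)\otimes\OO_X$ is the section $n_i = \pm\,\sigma_1\wedge\cdots\wedge\widehat{\sigma_i}\wedge\cdots\wedge\sigma_{k+2} \in H^0(\det M)$ (Cramer/Koszul description of the unique relation among $k+2$ sections of a rank-$(k+1)$ bundle). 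Consequently every $s\in H^0(L\otimes(\det M)^\vee)$ maps to $\sum_i (s\cdot n_i)\otimes\sigma_i$, and an element of $\ker\mu_W$ corresponds to an $s$ whose $(k+2)$-nd coordinate vanishes, i.e.\ $s\cdot n_{k+2}=0$ in $H^0(L)$ with $n_{k+2} = \pm\,\sigma_1\wedge\cdots\wedge\sigma_{k+1}$. Since $X$ is a projective variety, hence integral, this forces $s=0$ as soon as $n_{k+2}$ is not the zero section of $\det M$; the remaining coefficients $\ell_i = s\cdot n_i$ then vanish, giving the desired injectivity of $\mu_W$.

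The hard part is therefore showing that $\sigma_1\wedge\cdots\wedge\sigma_{k+1} \not\equiv 0$ for every linearly independent choice of $\sigma_1,\ldots,\sigma_{k+1} \in H^0(M)$, and this is exactly where the hypothesis $c_{k+1}(M)\neq 0$ enters. To handle it I would introduce the morphism
$$
\phi : X \longrightarrow \PP^{k+1}, \qquad x \longmapsto [N_x],
$$
where $N_x \subset H^0(M)$ is the line of sections vanishing at $x$ (equivalently, $\phi$ is determined by the line subbundle $N \subset H^0(M)\otimes\OO_X$, i.e.\ by the system of sections $n_1,\ldots,n_{k+2}$ of $\det M$). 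A point $[\sigma]$ lies in the image of $\phi$ precisely when $\sigma$ has a zero on $X$. Since $\dim X \geq k+1$ and $c_{k+1}(M)\neq 0$, standard intersection theory gives that a general section of $M$ has nonempty zero locus of the expected codimension $k+1$, so the image of $\phi$ is dense in $\PP^{k+1}$; being also closed (as $X$ is projective), it fills $\PP^{k+1}$. In particular the image meets the complement of any hyperplane, producing $x\in X$ with $N_x \not\subset W$, at which the $\sigma_i(x)$ remain independent in $M_x$ and $(\sigma_1\wedge\cdots\wedge\sigma_{k+1})(x)\neq 0$, as required.
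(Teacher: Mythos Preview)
Your argument is correct, and at its core it is the same as the paper's: both hinge on showing that for any independent $\sigma_1,\ldots,\sigma_{k+1}\in H^0(M)$ the section $\sigma_1\wedge\cdots\wedge\sigma_{k+1}\in H^0(\det M)$ is not identically zero, and both deduce this from the fact that $c_{k+1}(M)\neq 0$ forces every nonzero section of $M$ to have a zero. The paper simply phrases this as sheaf injectivity of $\Delta\otimes\OO_X\to M$ and derives a contradiction directly: if the wedge vanished identically, then at a zero of a complementary section $s_{k+2}$ the evaluation $H^0(M)\otimes\OO_X\to M$ would fail to be surjective, contradicting global generation. Your surjectivity-of-$\phi$ argument is the contrapositive of this same observation.

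Where you genuinely differ is in the reduction step. The paper passes silently from sheaf injectivity of $\Delta\otimes\OO_X\to M$ to injectivity of $H^0(L)\otimes\Delta\to H^0(L\otimes M)$ (tensor with the line bundle $L$, then use left exactness of $H^0$). You instead unpack $\ker\mu$ explicitly via the Koszul description of $N\cong(\det M)^\vee$, which makes the role of integrality of $X$ and of the particular coordinate $n_{k+2}$ completely transparent. This buys clarity and shows exactly where each hypothesis enters, at the cost of length; the paper's two-line route is quicker but leaves these mechanisms implicit.
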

\begin{proof}
\hspace{3mm} Notice that for each $\Delta \subset H^0(M)$ of dimension $k+1$, the map $\Delta \otimes \OO_X \longrightarrow M$ is injective seen as a morphism of sheaves. Suppose that it is not and take $s_1\ldots,s_{k+1}$ generators of the subspace, then we will have that for each $x \in X$ the vectors given by $s_1(x),\ldots,s_{k+1}(x)$ are linearly dependent. Consider a further section $s_{k+2} \in H^0(M)$ which completes a basis for the space of the global sections, we know that its zero locus is non empty by the assumptions on the dimension of $X$ and on $c_{k+1}(M)$. Taking a point of this locus, we obtain that the evaluation map $H^0(M) \otimes \OO_X \longrightarrow M$ cannot be surjective, which leads to contradiction because we have supposed that $M$ is globally generated.
\end{proof}
\begin{Remark}
Notice that the hypothesis $\dim X \geq k+1$ is necessary. Indeed, if we consider $X= \PP^1$ and $M = \OO_{\PP^1}(1) \bigoplus_{i=1}^k \OO_{\PP^1}$, and taking $\Delta$ generated by the two independent global sections of $\OO_{\PP^1}(1)$ and $k-1$ independent sections given by the other summands, then the morphism of sheaves $\Delta \otimes \OO_{\PP^1} \longrightarrow \OO_{\PP^1}(1)  \bigoplus_{i=1}^k \OO_{\PP^1}$ is not injective.
\end{Remark}
A particular case of the considered family is given by taking the triple $(X,L,M) = (\PP^k,\OO_{\PP^k}(1),T_{\PP^k}(-1))$. This example will be of extreme importance in the classification, as we will see in Section \ref{sec-class}.\\
\begin{Remark}
Let us give the geometrical interpretation of a Schwarzenberger bundle on $\GG(k,n)$, trying to recover the one given for the Steiner bundles, seen in Remark \ref{intStein}. Consider the Schwarzenberger bundle $F = F(X,L,M)$. Moreover, let us consider a point $\Gamma \in \GG(k,n)$ which is associated to a $(k+1)$-dimensional subspace $\Delta$ of independent global sections of $M$, and hence to a subspace $A \subset \PP(H^0(M))$ of codimension $k+1$. Let us suppose that $A$ is defined by the set of equations $\{y_1 = \ldots = y_{k+1} = 0\}$, with an appropriate choice of the coordinates $(y_1:\ldots:y_{n+1})$ for the projective space $\PP(H^0(M))$. Moreover, to each $f \in H^0(L)$ we can obtain an hyperplane $H_f \subset \PP(H^0(L))$ and we can suppose it is defined by the equation $\{x_1 = 0\}$, after choosing proper coordinates $(x_1:\ldots:x_s)$ for $\PP(H^0(L))$. This two subspaces define a further linear subspace of codimension $k+1$ in $\PP(H^0(L) \otimes H^0(M))$ and hence a subspace $\tilde{A}_f$ of codimension $k+1$ in $\PP(H^0(L\otimes M))$. Indeed, $\tilde{A}_f$ is defined by the vanishing of the equations $\{x_1 y_j\}_{j=1}^{k+1}$. Recall that by hypothesis we have that $f \cdot \Delta \longrightarrow H^0(L\otimes M)$ is injective, which ensures that all products $x_1 y_j$ are independent seen as linear forms of $\PP(H^0(L\otimes M))$.\\
We are able to define $\tilde{A}_f$ considering the Segre map
$$
\nu : \PP(H^0(L)) \times \PP(H^0(M)) \longrightarrow \PP(H^0(L) \otimes H^0(M)),
$$
indeed we have that
$$
\tilde{A}_{f} = \left\langle \nu\left(H_f \times \PP(H^0(M))\right) \cup \nu\left( \PP(H^0(L)) \times A\right)\right\rangle \cap \PP(H^0(L\otimes M)),
$$
where we look at $\PP(H^0(L\otimes M))$ as a linear subspace of $\PP(H^0(L) \otimes H^0(M))$.\\
Recalling the geometric interpretation of a Steiner bundle, we get that the projectivization of the fiber of $F$ over the point $\Gamma$ is given as the intersection of all $\tilde{A}_f$ for each $f \in H^0(L)$. We thus obtain that 
$$
\PP(F_\Gamma) = \left\langle \nu \left(\PP(H^0(L)) \times A \right) \right\rangle \cap \PP(H^0(L \otimes M)).
$$
\end{Remark}

\section{Jumping pairs of a Steiner bundle}
In this section we will first introduce the concept of jumping pair for a Steiner bundle on $\GG(k,n)$. This definition generalizes both the one proposed by one of the authors in \cite{arr} and the concept of \emph{unstable hyperplane} presented and used in \cite{anc-ott} and \cite{val1}. We will then study the set of the jumping pair, describing it as a projective variety and estimating its dimension.\\
Let us consider a Schwarzenberger bundle given by the triple $(X,L,M)$, imposing $\rk L = 1$ and $\rk M = k+1$, as for the case of the projective space, these will be the more significant examples. 

Recall that is the linear map $\varphi$ defining the bundle $F$ is the dual of the multiplication map:
$$
H^0(L\otimes M)^* \longrightarrow H^0(L)^* \otimes H^0(M)^*,
$$
so that for any point $x\in X$ we have that the image of $H^0(L_x\otimes M_x)^*$ is $H^0(L_x)^* \otimes H^0(M_x)^*$. This motivates the following: \begin{Definition}\label{def-jump}
Let $F$ be a Steiner bundle over $\GG(k,n)$. A pair $(a,\Gamma) \in G (1,S^*) \times G(k+1,H^0(\UU^\lor))$ is called a jumping pair if, considering the linear map $T^* \stackrel{\varphi}{\rightarrow} S^* \otimes H^0(\UU^\lor)$, the tensor product $a \otimes \Gamma$ belongs to $\im \varphi$.
\end{Definition}
Let us fix some notation: we will denote by $\tilde{J}(F)$ the set of jumping pairs of $F$. Moreover, we have two natural projections $\tilde{J}(F)\stackrel{\pi_1}{\rightarrow} G(1,S^*)$ and $\tilde{J}(F)\stackrel{\pi_2}{\rightarrow} G(k+1,H^0(\UU^\lor))$. We will denote by $\Sigma(F)$ the image of the first projection and by $J(F)$ the image of the second one. This last set is called the set of the \emph{jumping spaces} of $F$.\\
\begin{Lemma}\label{lem-jumprid}
Let $F$ be a Steiner bundle over $\GG(k,n)$ and let $F_0$ be its reduced summand, then $\tilde{J}(F) = \tilde{J}(F_0)$.
\end{Lemma}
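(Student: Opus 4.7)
The plan is to argue that the set of jumping pairs depends only on the image of the defining linear map $\varphi\colon T^*\to S^*\otimes H^0(\UU^\lor)$, and then invoke Remark \ref{rmk-reduced} to identify this image with the one produced by the reduced summand.

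First I would unwind the definition: a pair $(a,\Gamma)\in G(1,S^*)\times G(k+1,H^0(\UU^\lor))$ belongs to $\tilde{J}(F)$ precisely when the decomposable tensor $a\otimes \Gamma\in S^*\otimes H^0(\UU^\lor)$ lies in $\im\varphi$. Crucially, this condition only involves the subspace $\im\varphi\subset S^*\otimes H^0(\UU^\lor)$, not the particular parametrisation of it by $T^*$. So the question reduces to comparing $\im\varphi$ with the image of the analogous map $\varphi_0$ attached to the reduced summand $F_0$.

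Next I would quote Remark \ref{rmk-reduced}, which gives the splitting $F=F_0\oplus \bigl((T/T_0)\otimes \OO_\GG\bigr)$ with $T_0^*=\im\varphi$, and identifies $F_0$ as the Steiner bundle associated to the inclusion $\varphi_0\colon T_0^*\hookrightarrow S^*\otimes H^0(\UU^\lor)$. From this description it is immediate that $\im\varphi_0=T_0^*=\im\varphi$, so the two subspaces of $S^*\otimes H^0(\UU^\lor)$ whose intersection with the Segre variety of decomposable tensors cuts out $\tilde{J}(F)$ and $\tilde{J}(F_0)$ coincide.

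Combining these two observations yields $\tilde{J}(F)=\tilde{J}(F_0)$. There is no genuine obstacle here: the entire content is bookkeeping around the factorisation $T^*\twoheadrightarrow T_0^*\hookrightarrow S^*\otimes H^0(\UU^\lor)$ of $\varphi$, and the only thing worth stressing in the write-up is that the jumping condition is defined intrinsically in terms of $\im\varphi$, which is why the trivial summand $(T/T_0)\otimes \OO_\GG$ contributes nothing to $\tilde{J}(F)$.
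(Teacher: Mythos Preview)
Your argument is correct and is essentially the same approach as the paper's: both reduce to the observation that the jumping condition $a\otimes\Gamma\subset\im\varphi$ depends only on the subspace $\im\varphi=T_0^*$, which by Remark \ref{rmk-reduced} is exactly the image of the map $\varphi_0$ attached to $F_0$. The paper phrases the converse direction by lifting $a\otimes\Gamma$ to a subspace $\Lambda\subset T^*$ and arguing that $\Lambda$ must avoid $\ker\varphi$, but this is just a roundabout way of saying $a\otimes\Gamma\subset\im\varphi=T_0^*$; your formulation is more direct. One small terminological quibble: $a\otimes\Gamma$ is a $(k+1)$-dimensional subspace of $S^*\otimes H^0(\UU^\lor)$ rather than a single decomposable tensor, so ``lies in $\im\varphi$'' should be read as ``is contained in $\im\varphi$''.
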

\begin{proof}
Recall that we have a splitting of the bundle that gives $F^* = F_0^* \oplus ((T^* / T_0^*) \otimes \OO_\GG)$, where $T_0^* = \im \varphi$, and hence $H^0(F^*) = H^0(F_0^*) \oplus (T^* / T_0^*)$, with $H^0(F_0^*)=0$. Obviously if $(a,\Gamma)$ is a jumping pair for $F_0$, then it is also a jumping pair for $F$. Using the definition we prove the viceversa. Indeed, we know that a jumping pair is defined as $0 \neq (a \otimes \Gamma) \subset S^* \otimes H^0(\UU^\lor)$ such that it exists $\Lambda \subset T^*$ with $\varphi(\Lambda) = a \otimes \Gamma$. If $\Lambda \cap (T^* / T_0^*) \neq \emptyset$ then $\dim a\otimes \Gamma < k+1$ because of the exact sequence defining $\varphi$. We thus have that $\Lambda \subset T_0^*$ and $a\otimes \Gamma$ is a jumping pair also for $F_0$.
\end{proof}
The previous lemma tells us that the jumping locus of a Steiner bundle coincides with the one of its reduced summand.\\
Our goal is to find a triple in order to define a Steiner bundle as a Schwarzenberger. The following result will show us how to construct such triple.
\begin{Lemma}\label{lem-varproj}
Let $F$ be a Steiner bundle on $\GG(k,n)$ and $T_0^* \subset S^* \otimes H^0(\UU^\lor)$ be the image of $\varphi$ (or equivalently the vector space associated with the reduced summand $F_0$ of $F$).\\
Consider the Segre generalized embedding
$$
\begin{array}{rccccc}
\nu: & G (1,S^*)& \times & G(k+1,H^0(\UU^\lor)) & \longrightarrow & G(k+1,S^* \otimes H^0(\UU^\lor))\\
& a & , & \Gamma & \mapsto & a \otimes \Gamma
\end{array}
$$
Then
\begin{description}
\item[(i)] we have that
$$
\tilde{J}(F) = \im \nu \cap G(k+1, T_0^*),
$$
\item[(ii)] Let $\A, \BB, \HH$ be the universal bundles of ranks respectively $1,k+1$ and $k+1$ over $G(1,S^*), G(k+1,H^0(\UU^\lor))$ and $G(k+1,T_0^*)$.\\
Consider the projections
$$
\xymatrix{
& \tilde{J}(F) \ar[dl]_{\pi_1} \ar[dr]^{\pi_2} \\
G(1,S^*) & & G(k+1,H^0(\UU^\lor))
}
$$
and that $\tilde{J}(F) \subset G(k+1,T_0^*)$. Assume that the natural maps
$$
\begin{array}{rlcl}
\alpha : & H^0(G(1,S^*),\A) & \longrightarrow & H^0(\tilde{J}(F), \pi_1^*\A)\\
\beta : & H^0(G(k+1, H^0(\UU^\lor)),\BB) & \longrightarrow & H^0(\tilde{J}(F), \pi_2^*\BB)\\
\gamma : & H^0(G(k+1,T_0^*),\HH) & \longrightarrow & H^0(\tilde{J}(F), \HH_{|\tilde{J}(F)})
\end{array}
$$
are all isomorphisms. Then the Steiner bundle $F_0$, reduced summand of $F$, is a Schwarzenberger bundle given by the triple 
$$
(\tilde{J}(F),\pi_1^* \A, \pi_2^* \BB).
$$
\end{description}
\end{Lemma}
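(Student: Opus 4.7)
Part (i) is essentially tautological: by the definitions of $\nu$ and of a jumping pair, $(a,\Gamma)\in\tilde J(F)$ iff the $(k+1)$-dimensional subspace $a\otimes\Gamma$ is contained in $\im\varphi = T_0^*$, i.e.\ iff $a\otimes\Gamma\in G(k+1,T_0^*)$. Since $\nu$ is injective on pairs $(a,\Gamma)$, this identifies $\tilde J(F)$ with $\im\nu\cap G(k+1,T_0^*)$.

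For (ii), the plan is to check that the Schwarzenberger resolution associated to the triple $(\tilde J(F),\pi_1^*\A,\pi_2^*\BB)$ coincides, after the identifications furnished by $\alpha,\beta,\gamma$, with the Steiner resolution of $F_0$. First I would match the bundles. Using (i) to embed $\tilde J(F)\hookrightarrow G(k+1,T_0^*)$, a fiberwise check at a jumping pair $(a,\Gamma)$ yields $\HH|_{\tilde J(F)}\cong \pi_1^*\A\otimes\pi_2^*\BB$, since both sides have fiber $(a\otimes\Gamma)^*=a^*\otimes\Gamma^*$. Standard global section computations on the three ambient Grassmannians, combined with the isomorphisms $\alpha,\beta,\gamma$, then identify
\[
H^0(\pi_1^*\A)\cong S,\quad H^0(\pi_2^*\BB)\cong H^0(\UU^\lor)^*,\quad H^0(\pi_1^*\A\otimes\pi_2^*\BB)\cong T_0.
\]
In particular $h^0(\pi_2^*\BB)=n+1$ and $\PP(H^0(\pi_2^*\BB)^*)=\PP^n$, so the Schwarzenberger construction does produce a bundle on the same Grassmannian $\GG(k,n)$ where $F_0$ lives.

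Next I would match the defining maps, and this is the step I expect to be the main obstacle. The Schwarzenberger resolution is induced by the multiplication $\mu:H^0(\pi_1^*\A)\otimes H^0(\pi_2^*\BB)\to H^0(\pi_1^*\A\otimes\pi_2^*\BB)$, which after the identifications above becomes a map $\mu:S\otimes H^0(\UU^\lor)^*\to T_0$; the Steiner resolution of $F_0$ is dual to the inclusion $\varphi_0:T_0^*\hookrightarrow S^*\otimes H^0(\UU^\lor)$. The heart of the argument is the equality $\mu=\varphi_0^*$. I would verify this fiberwise on $\tilde J(F)$: for $s\in S$, $t\in H^0(\UU^\lor)^*$ and $(a,\Gamma)\in \tilde J(F)$, the section $\mu(s\otimes t)$ restricts at $(a,\Gamma)$ to $s|_a\otimes t|_\Gamma\in a^*\otimes\Gamma^*$, whereas $\varphi_0^*(s\otimes t)$, viewed as a section of $\HH|_{\tilde J(F)}$ via $\gamma^{-1}$, restricts to the same tensor because the universal evaluation $T_0\otimes\OO\to\HH$ is dual to the tautological inclusion $a\otimes\Gamma\hookrightarrow T_0^*$. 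Since $\gamma$ is an isomorphism, global sections of $\HH|_{\tilde J(F)}$ are determined by their fiberwise values, so these pointwise equalities force $\mu=\varphi_0^*$ as abstract linear maps. The two resolutions then agree termwise and differentialwise, yielding $F(\tilde J(F),\pi_1^*\A,\pi_2^*\BB)\cong F_0$.
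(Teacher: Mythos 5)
Your proof is correct and follows essentially the same route as the paper: part (i) is the same tautology, and for part (ii) your fiberwise verification that $\mu=\varphi_0^*$ is exactly the content of the paper's commutative square whose vertical arrows are $\alpha\otimes\beta$ and $\gamma$, whose top row is the dual of the inclusion $T_0^*\hookrightarrow S^*\otimes H^0(\UU^\lor)$, and whose bottom row is the Schwarzenberger multiplication. You simply unpack the commutativity of that square at each jumping pair, which is a harmless expansion of the same argument.
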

\begin{proof}
Notice that part (i) is just the geometrical interpretation of the definition of jumping pair we have given.\\
To prove part (ii), consider the commutative diagram
$$
\xymatrix{
S \otimes H^0(\UU^\lor)^* \ar[r] \ar[d] & T_0 \ar[d]\\
H^0(\tilde{J}(F), \pi_1^* \A) \otimes H^0(\tilde{J}(F),\pi_2^* \BB) \ar[r] &H^0(\tilde{J}(F),\pi_1^* \A \otimes \pi_2^* \BB)
}
$$
The top map is the dual of the inclusion $T_0^* \hookrightarrow S^* \otimes H^0(\UU^\lor)$ that defines the reduced summand of a Steiner bundle; such map can be identified with the following
$$
H^0(G(1,S^*),\A) \otimes H^0(G(k+1,H^0(\UU^\lor)),\BB) \longrightarrow H^0(G(k+1,T_0^*),\HH)
$$
defined by the composition of the multiplication map and the restriction of the global sections, due to the fact that $T_0^* \subset S^* \otimes H^0(\UU^\lor)$,
$$
H^0(G(k+1,S^* \otimes H^0(\UU^\lor)),\tilde{\HH}) \longrightarrow H^0(G(k+1,T_0^*),\HH),
$$
where of course $\tilde{\HH}$ denotes the universal bundle over $G(k+1,S^* \otimes H^0(\UU^\lor))$.
The vertical maps, due to the last identification, are $\alpha \otimes \beta$ and $\gamma$, which are isomorphisms by hypothesis.\\
The bottom map is the multiplication map whose dual defines a Schwarzenberger bundle.\\
Because of the isomorphisms in the diagram, we can state that $F_0$ is Schwarzenberger defined by the triple $(\tilde{J}(F),\pi_1^* \A, \pi_2^* \BB)$ and this concludes the proof.
\end{proof}
Let us fix some notation: we will consider a jumping pair both as vectorial and projective. This means that we will study, in order to see the locus as a projective variety, the pairs $(\PP(a^*),\PP(\Gamma^*))$ whose tensor product belongs to $\PP(S \otimes H^0(\UU^\lor)^*)$. In order to do so consider the generalized Segre map
$$
\begin{array}{rccccc}
\nu: & \PP(S)& \times & \GG(k,\PP(H^0(\UU^\lor))^*) & \longrightarrow & \GG(k,\PP(S \otimes H^0(\UU^\lor)^*)) := \bar{\GG}\\
& \PP(a^*) & , & \PP(\Gamma^*) & \mapsto & \PP(a^* \otimes \Gamma^*)
\end{array}
$$
and by Lemma \ref{lem-varproj} we have that $\tilde{J}(F) = \im \nu \cap \GG(k,\PP(T_0))$. Notice that we take $\PP(T_0)$ because of what we proved in Lemma \ref{lem-jumprid}.\\
To limit the dimension of $\tilde{J}(F)$ from below we need then to compute the expected dimension of the intersection, i.e. when we have a complete intersection, which is
\begin{equation}\label{lowdim}
\dim \tilde{J}(F) \geq (k+1)(t-k-sn-s+n) + s -1.
\end{equation}
Observe that such inequality becomes an equality for a general Steiner bundles, hence the general Steiner bundle cannot be described as a Schwarzenberger.\\
In order to find a limit from above, we will fix a jumping pair $\Lambda \in \tilde{J}(F)$ and we will consider the tangent space $T_\Lambda \tilde{J}(F)$ of the jumping variety at the fixed point. First of all we need to find a proper description of such tangent space, then, through more general results of linear algebra, we will find the requested bound.
\begin{Theorem} \label{teo-tangjump}
Let $F$ be a Steiner bundle over $\GG(k,n)$ and let $\Lambda \in \tilde{J}(F)$ be one of its jumping pairs; then 
\begin{equation}\label{eq-tang}
T_{\Lambda} \tilde{J}(F) = \left\{ \psi \in \Hom \left(\Lambda,\frac{T_0^*}{\Lambda}\right)|
{\begin{array}{cc}
(\psi(\varphi_i))(\ker \varphi_i) \subset <v_1>  \\
(\psi(\varphi_i))(u_i) \equiv (\psi(\varphi_j))(u_j) \mod v_1
\end{array} }
\right\},
\end{equation}
where $v_1 \in S^*$ and the sets $\{\varphi_i\}_{i=1}^{k+1}$ and $\{u_j\}_{j=1}^{n+1}$ are basis respectively of $\Lambda$ and $H^0(\UU^\lor)^*$, properly chosen as we will see in the proof of the next theorem. 
\end{Theorem}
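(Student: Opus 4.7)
The strategy is to realize $\tilde{J}(F)$ as the pullback of $G(k+1,T_0^*)$ under the Segre embedding $\nu: G(1,S^*) \times G(k+1,H^0(\UU^\lor)) \to G(k+1, S^* \otimes H^0(\UU^\lor))$ provided by Lemma \ref{lem-varproj}(i), so that $T_\Lambda \tilde{J}(F) = T_\Lambda \im(\nu) \cap T_\Lambda G(k+1,T_0^*)$ inside the ambient tangent space $\Hom(\Lambda, (S^*\otimes H^0(\UU^\lor))/\Lambda)$. The plan is to compute each factor of this intersection explicitly in a judicious basis, and then read off the two conditions in the statement.

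First I would fix the basis promised by the theorem. Let $v_1\in S^*$ span $a$ and extend to a basis of $S^*$; pick a basis $\{\gamma_1,\ldots,\gamma_{k+1}\}$ of $\Gamma$, extended to a basis of $H^0(\UU^\lor)$; and let $\{u_j\}_{j=1}^{n+1}$ be the dual basis of $H^0(\UU^\lor)^*$, so that $\gamma_i(u_j)=\delta_{ij}$ for $j\le k+1$ and $\gamma_i(u_j)=0$ otherwise. Under the identification $S^*\otimes H^0(\UU^\lor)=\Hom(H^0(\UU^\lor)^*,S^*)$, the elements $\varphi_i:=v_1\otimes\gamma_i$ form a basis of $\Lambda$; each is a rank-one map with $\im\varphi_i = \langle v_1\rangle$ and $\ker\varphi_i = \langle u_j : j\ne i\rangle$. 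A first-order deformation in $G(1,S^*) \times G(k+1,H^0(\UU^\lor))$ of $(a,\Gamma)$ is then recorded by a tuple $(v_1',\gamma_1',\ldots,\gamma_{k+1}')$ with $v_1'\in S^*/\langle v_1\rangle$ and $\gamma_i'\in H^0(\UU^\lor)/\Gamma$, and a Leibniz-rule computation yields
$$\psi(\varphi_i) = v_1' \otimes \gamma_i + v_1 \otimes \gamma_i' \pmod{\Lambda}.$$
This formula identifies $T_\Lambda\im(\nu)$ with the subspace of $\Hom(\Lambda,(S^*\otimes H^0(\UU^\lor))/\Lambda)$ consisting of $\psi$ of this shape; the further constraint $\psi\in\Hom(\Lambda,T_0^*/\Lambda)$ records intersection with the second factor.

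To extract the two conditions, I would evaluate the displayed formula on each $u_j$. On $u\in\ker\varphi_i=\langle u_j:j\ne i\rangle$ the first summand vanishes, leaving $\psi(\varphi_i)(u)=\gamma_i'(u)v_1\in\langle v_1\rangle$; on $u=u_i$ it gives $v_1'+\gamma_i'(u_i)v_1\equiv v_1'\pmod{v_1}$, a class independent of $i$. For the converse, given any $\psi\in\Hom(\Lambda,T_0^*/\Lambda)$ satisfying the two conditions, I would define $v_1'\in S^*/\langle v_1\rangle$ as the common class of $\psi(\varphi_i)(u_i)$ and reconstruct each $\gamma_i'\in H^0(\UU^\lor)/\Gamma$ by reading off, from the first condition, the scalar coefficients of $v_1$ in $\psi(\varphi_i)(u_j)$ for $j\ne i$. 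The only delicate point is verifying that the difference $\psi(\varphi_i)-v_1'\otimes\gamma_i-v_1\otimes\gamma_i'$ actually lies in $\Lambda$, which amounts to checking that this difference, viewed as a map $H^0(\UU^\lor)^*\to S^*$, has its values in $\langle v_1\rangle$ on $u_1,\ldots,u_{k+1}$ and vanishes on $u_{k+2},\ldots,u_{n+1}$. The principal obstacle I foresee is exactly this bookkeeping of the three simultaneous modular ambiguities (mod $\Lambda$, mod $\langle v_1\rangle$ and mod $\Gamma$), which must be tracked with care so that $v_1'$ and the $\gamma_i'$ are well defined and reproduce $\psi$ via the displayed formula.
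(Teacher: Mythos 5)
Your proposal is correct and shares the paper's overall decomposition: reduce via Lemma~\ref{lem-varproj}(i) to computing $T_\Lambda\Seg$ and intersecting with $\Hom(\Lambda,T_0^*/\Lambda)$. The genuine difference lies in how $T_\Lambda\Seg$ is identified. The paper (Theorem~\ref{teo-tangseg}) works from the \emph{defining equations} of the Segre variety, extracting $(s-1)(nk+n+k)$ linear conditions (deferring that count to an external thesis) and then confirming correctness through a dimension count, constructing an explicit $(s-1)$-dimensional basis for a quotient space $P$ of the candidate space $\tilde T$; it also needs the preliminary observation that the uniform condition $(\psi(\varphi))(\ker\varphi)\subset\langle s_0\rangle$ for all $\varphi\in\Lambda$ is equivalent to the two basis-level conditions. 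You instead \emph{parametrize} $T_\Lambda\Seg$ directly as the image of $d\nu_{(a,\Gamma)}$ via the Leibniz formula $\psi(\varphi_i)=v_1'\otimes\gamma_i+v_1\otimes\gamma_i'\pmod\Lambda$, which yields both conditions by evaluation at the $u_j$ and bypasses both the external reference and the dimension count. This is a more self-contained route. The converse step you flag as delicate does close cleanly: fix a lift $\tilde\psi$ of $\psi$ to $\Hom(\Lambda,\Hom(H^0(\UU^\lor)^*,S^*))$, set $v_1'=\tilde\psi(\varphi_1)(u_1)$, for $j\neq i$ define $\gamma_i'(u_j)$ as the $v_1$-coefficient of $\tilde\psi(\varphi_i)(u_j)$ (well defined by the first condition), and choose $\gamma_i'(u_i)$ to absorb the residual $\langle v_1\rangle$-term allowed by the second condition; then $v_1'\otimes\gamma_i+v_1\otimes\gamma_i'$ equals $\tilde\psi(\varphi_i)$ on the nose, so the parametrized image and the set cut out by the two conditions coincide.
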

This last result is a consequence of the following theorem which describes the tangent space of the generalized Segre variety at the fixed jumping pair $\Lambda \in \tilde{J}(F)$ (we will denote by $\Seg$ the image of $\nu$).
\begin{Theorem}\label{teo-tangseg}
Let $\Seg$ denote the image of the generalized projective Segre embedding and let $\tilde{J}(F)\subset \Seg$ be the set of the jumping pairs of a Steiner vector bundle $F$ over $\GG(k,n)$. Fixing a point $\Lambda = s_0\otimes \Gamma \in \tilde{J}(F)$, we have that
$$
T_\Lambda \Seg :=  \left\{ \psi \in \Hom \left(\Lambda, \frac{\Hom(H^0(\UU^\lor)^*,S^*)}{\Lambda} \right) |\:\forall\: \varphi \in \Lambda, (\psi(\varphi))(\ker \varphi) \subset <s_0>   \right\}
$$
where $T_\Lambda \Seg$ denotes the tangent of the Segre image at the point $\Lambda$.
\end{Theorem}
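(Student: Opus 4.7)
The plan is to prove the two inclusions separately. The forward inclusion $T_\Lambda \Seg \subset \{\psi : \psi(\varphi)(\ker\varphi) \subset <s_0>\ \forall \varphi \in \Lambda\}$ falls out of an explicit differentiation of a curve in $\Seg$; the reverse is obtained by a dimension count, using the injectivity of the differential $d\nu$ at $(s_0,\Gamma)$.

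For the forward inclusion I would fix a basis $\gamma_1,\ldots,\gamma_{k+1}$ of $\Gamma$, setting $\varphi_i := s_0 \otimes \gamma_i$ as the induced basis of $\Lambda$. A tangent vector at $\Lambda$ is realized by a curve $\Lambda(t) = s_0(t) \otimes \Gamma(t)$ with $s_0(0) = s_0$, $\Gamma(0) = \Gamma$, whose first-order data are $\dot s \in S^*$ (well-defined modulo $<s_0>$) and $\dot\gamma_i \in H^0(\UU^\lor)$ (modulo $\Gamma$); these ambiguities are absorbed into $\Lambda$ when taking the class in $V/\Lambda$. Differentiating gives $\psi(\varphi_i) \equiv \dot s \otimes \gamma_i + s_0 \otimes \dot\gamma_i \mod \Lambda$, and linearity of $\psi$ on $\varphi = \sum c_i \varphi_i = s_0 \otimes \gamma$ (so $\gamma = \sum c_i \gamma_i$) yields, for any $u \in \ker\gamma = \ker\varphi$,
$$
\psi(\varphi)(u) = \gamma(u)\,\dot s + \left(\sum_i c_i \dot\gamma_i\right)\!(u)\, s_0 = \left(\sum_i c_i \dot\gamma_i\right)\!(u)\, s_0 \in <s_0>,
$$
establishing the inclusion.

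For equality, $d\nu$ is injective at $(s_0,\Gamma)$: if $\dot s \otimes \gamma_i + s_0 \otimes \dot\gamma_i \in \Lambda$ for every $i$, then projecting to $(S^*/<s_0>) \otimes H^0(\UU^\lor)$ kills the $s_0 \otimes \dot\gamma_i$ summand and the image of $\Lambda$, forcing $\dot s \otimes \gamma_i = 0$ in the quotient, whence $\dot s \in <s_0>$ and subsequently $\dot\gamma_i \in \Gamma$. Thus $\dim T_\Lambda \Seg = (s-1) + (k+1)(n-k)$. To show the RHS has the same dimension, I would pick a basis $u_1,\ldots,u_{n+1}$ of $H^0(\UU^\lor)^*$ dual to the $\gamma_i$ (so $\gamma_i(u_j) = \delta_{ij}$ for $i,j \leq k+1$). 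Testing the condition on $\varphi = \varphi_i$ gives the constraints $\psi(\varphi_i)(u_j) \in <s_0>$ for $j \neq i$; testing on $\varphi = \varphi_i - \varphi_j$ (using $u_i + u_j \in \ker(\gamma_i - \gamma_j)$ together with the previous constraints) adds the identifications $\psi(\varphi_i)(u_i) \equiv \psi(\varphi_j)(u_j) \mod <s_0>$. A short bilinearity check shows these two families are jointly equivalent to the full ``for all $\varphi$" condition; they involve disjoint blocks of entries in $V/\Lambda$, hence are transversal. A direct count shows they cut out codimension $(s-1)[(k+1)n + k]$ inside $\Hom(\Lambda, V/\Lambda)$ (of dimension $(k+1)(s(n+1)-(k+1))$), leaving exactly $(s-1) + (k+1)(n-k)$ and matching $\dim T_\Lambda \Seg$.

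The main obstacle is the bookkeeping around the two quotients --- by $\Lambda$ inside $V = \Hom(H^0(\UU^\lor)^*, S^*)$ and by $<s_0>$ inside $S^*$ --- and the verification that the a priori infinite condition indexed by $\varphi \in \Lambda$ reduces by bilinearity to the finite, transversal families above. Once these are in hand, combining the explicit curve computation with the dimension match closes the argument; the resulting explicit description then feeds directly into Theorem \ref{teo-tangjump}.
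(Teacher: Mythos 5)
Your proof is correct and takes a genuinely different route from the paper's. The paper establishes the description by looking at the linear parts of the local defining equations of the Segre variety (asserting, with a reference to a thesis, that these cut out $(s-1)(nk+n+k)$ independent conditions), and then verifies the description by computing $\dim\tilde{T}$ for the auxiliary space $\tilde{T}\subset\Hom(\Lambda,\Hom(H^0(\UU^\lor)^*,S^*))$ via a short exact sequence $0\to K\to\tilde T\to P\to 0$ with $\dim K=(k+1)(n+1)$ and $\dim P=s-1$, finally subtracting $\dim\End(\Lambda)=(k+1)^2$. You instead obtain the inclusion $T_\Lambda\Seg\subset\{\psi:\psi(\varphi)(\ker\varphi)\subset\langle s_0\rangle\}$ directly by differentiating a curve $s_0(t)\otimes\Gamma(t)$, which is cleaner and more explicit than appealing to local equations, and then you count constraints in $\Hom(\Lambda,V/\Lambda)$ directly rather than going through $\tilde T$. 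The two dimension counts are of course arithmetically equivalent, but yours has the advantage of making the containment step transparent, whereas the paper leaves the match between the linearized equations and the claimed description somewhat implicit. One small caution: your phrase ``a short bilinearity check shows these two families are jointly equivalent to the full condition'' and the transversality assertion should be spelled out, since this is exactly where the reduction from infinitely many conditions to finitely many happens (the check is: for $\varphi=\sum c_i\varphi_i$ and $u=\sum a_ju_j$ with $\sum_{j\le k+1}a_jc_j=0$, expand $\psi(\varphi)(u)\bmod\langle s_0\rangle$, use the off-diagonal vanishing to reduce to $\left(\sum c_ia_i\right)\left[\psi(\varphi_1)(u_1)\right]=0$). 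With that written out, your argument is complete and arguably more self-contained than the paper's.
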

\begin{proof}
First of all, observe that the description of the tangent space is equivalent to the following one
$$
T_{\Lambda} \Seg = \left\{ \psi \in \Hom \left(\Lambda,\frac{\Hom(H^0(\UU^\lor)^*,S^*)}{\Lambda}\right)|
{\begin{array}{cc}
(\psi(\varphi_i))(\ker \varphi_i) \subset <v_1>  \\
(\psi(\varphi_i))(u_i) \equiv (\psi(\varphi_j))(u_j) \mod v_1
\end{array} }
\right\}
$$
after choosing properly basis for $\Lambda$, $H^0(\UU^\lor)^*$ and $S^*$ (the basis will be described later in the proof).\\
Consider a jumping point $\Lambda$ that we can suppose to be the origin. Consider also the equations which locally define the Segre generalized variety and look, in such defining equations, for their linear part.  In this way, we obtain $(s-1)(nk+n+k)$ independent conditions (for more details, see \cite{mar-tesi}).\\
We can observe that the number of conditions we found is exactly the one needed to define the tangent space.\\  
The notation used is the following: a jumping pair $\Lambda$ is equal to a tensor product $s_0 \otimes \Gamma \subset S^* \otimes H^0(\UU^\lor) = \Hom(H^0(\UU^\lor)^*,S^*)$. We can suppose that, once we fixed $\Lambda$, for every choice of a basis $v_1,\ldots,v_s$ of $S^*$ we can take $v_1 = s_0$. Such vector represents the image of all the rank 1 elements in $\Lambda$, seen as morphism from $H^0(\UU^\lor)^*$ to $S^*$.\\
To prove that the vector space defined by all the linear forms extracted is actually the tangent space, we will compute its dimension. It will be sufficient to compute the dimension of the following vector space
$$
\tilde{T} := \left\{ \psi \in \Hom \left(\Lambda, \Hom(H^0(\UU^\lor)^*,S^*) \right) |\:\forall\: \varphi \in \Lambda, (\psi(\varphi))(\ker \varphi) \subset <v_1> \right\}
$$
Let us consider the proper subspace of $\tilde{T}$ given by
$$
K = \left\{\psi \in \Hom (\Lambda, \Hom (H^0(\UU^\lor)^*,<v_1>))\right\}
$$
with of course $\dim K = (k+1)(n+1)$. We obtain the following commutative diagram
$$
\xymatrix{
0 \ar[r] & K \ar@{^{(}->}[r] & \tilde{T} \ar[dr] \ar[r] & P \ar[r] \ar@{^{(}->}[d]& 0 \\
&&& \Hom\left( \Lambda, \Hom \left( H^0(\UU^\lor)^*, \frac{S^*}{<v_1>}\right)\right)
}
$$
where $P \subset \Hom\left( \Lambda, \Hom \left( H^0(\UU^\lor)^*, \frac{S^*}{<v_1>}\right)\right)$ represents the subset of morphisms whose images are all rank 1 elements of the set of morphisms $\Hom \left( H^0(\UU^\lor)^*, \frac{S^*}{<v_1>}\right)$. Observe that $P$ is also a vector space, being the quotient of two vector spaces. Our next goal is to compute the dimension of $P$.\\
Consider a family of morphisms $g_i : S^* \longrightarrow S^*$ such that $g_i(v_1)=v_i$, for $i=2,\ldots,s$ and define $s-1$ morphisms $\psi_i(\varphi) := g_i \circ \varphi$. We get then the commutative diagrams, one for each $i$,
$$
\xymatrix{
H^0(\UU^\lor) ^* \ar[r]^{\varphi} \ar[d]_{\simeq} &  <v_1> \subset S^* \ar@<-3ex>[d]^{g_i} \\
H^0(\UU^\lor) ^* \ar[r]^{\psi_i(\varphi)}\ar@<-1ex>[dr] &  <v_i> \subset S^* \ar@<-3ex>[d]^{\pi} \\
& <\bar{v}_i> \subset \frac{S^*}{<v_1>}
}
$$
where $\pi$ simply represents the quotient of $S^*$ by the subspace $<v_1>$. Observe that all the images of the elements $g_i(v_1)$ through the projection $\pi$ are independent by definition. In this way we have constructed the maps $\psi_2,\ldots,\psi_{s}$ and we want to prove they form a basis for $P$.\\
Let us fix a basis $u_1,\ldots,u_{n+1}$ for $H^0(\UU^\lor)^*$ and let us build a basis $\varphi_1,\ldots,\varphi_{k+1}$ of $\Lambda$ with the property that $\varphi_i(u_i) = v_1$ and $\varphi_i(u_j) = 0$, for every $i$ from $1$ to $k+1$ and every $j$ from 1 to $n+1$, with $j\neq i$.\\
Let us take the generic element $\psi \in P$ and check how it behaves when applied to the elements of the chosen basis of $\Lambda$. Consider two such elements $\varphi_i$ and $\varphi_j$, with $i\neq j$. By hypothesis we know that $(\psi(\varphi_i))(H^0(\UU^\lor)^*) \subset <\tilde{v}_i>$ with $\tilde{v}_i \in \frac{V}{<v_1>}$ and also $(\psi(\varphi_j))(H^0(\UU^\lor)^*) \subset <\tilde{v}_j>$ with $\tilde{v}_j \in \frac{V}{<v_1>}$ . Furthermore we know that, considering the element given by the sum $\varphi_i + \varphi_j$, we have $(\psi(\varphi_i+\varphi_j))(H^0(\UU^\lor)^*) \subset <\tilde{v}>$ with $\tilde{v} \in \frac{V}{<v_1>}$, because the morphism $\psi(\varphi_i + \varphi_j)$ must have rank one and therefore exists a $\tilde{v}$ that gives us the last inclusion. We repeat this process for every pair of independent elements in the basis of $\Lambda$, so that we can state that we must get the same vector $\tilde{v} \in \frac{V}{<v_1>}$ for every $\varphi \in \Lambda$, i.e. $((\psi(\varphi))(H^0(\UU^\lor)^*) \subset <\tilde{v}>$.\\
We have that $\tilde{v} = \lambda_2 \tilde{v}_2 + \lambda_3 \tilde{v_3} + \ldots \lambda_s \tilde{v}_s$ where $\lambda_i \in \KK$ and $\tilde{v}_i$, for $i=2,\ldots,s$, are the vectors found defining the maps $\psi_i$ and moreover the previous equivalence is true for every $u \in H^0(\UU^\lor)^*$ and for every $\phi \in \Lambda$, which guarantees us that $\psi = \lambda_2 \psi_2 + \ldots +\lambda_s \psi_s$ for every $\psi \in P$.\\
We have thus proved that we have a basis for $P$, whose morphisms $\psi_p$ have the following behavior: we know that $(\psi_p(\varphi_i))(\ker \varphi_i) \subset <v_1>$  for every $i = 1,\ldots,k+1$ and to every morphism we can associate a vector $v_p$ with $\pi(v_p) = \tilde{v}_p \in \frac{S^*}{<v_1>}$ which is not zero, such that $(\psi_p(\varphi_i))(u_i) \subset <v_p>$. Considering the sum of two morphisms and its kernel $\ker(\varphi_i + \varphi_j) = <u_i - u_j ,\{u_{\bar{k}}\}_{\bar{k}\neq i,j}>$ we also get $(\psi_p(\varphi_i + \varphi_j))(\ker (\varphi_i + \varphi_j)) \subset <v_1>$; so we can extend such properties to every element of $\Lambda$.\\
To conclude we have proved that $\dim P = s-1$, hence $\dim \tilde{T} = (k+1)(n+1)+s-1.$\\
We have, by definition, that $\dim T_\Lambda \Seg = \dim \tilde{T} - \dim \End(\Lambda) = (k+1)(n-k) + s-1 = \dim (\PP(S) \times \GG(k,n))$; so it is actually the tangent space we were looking for and this completes the proof.
\end{proof}
\begin{Remark}
The description of the tangent space at a jumping point
$$
T_\Lambda \tilde{J}(F) :=  \left\{ \psi \in \Hom \left(\Lambda, \frac{T^*}{\Lambda} \right) |\:\forall\: \varphi \in \Lambda, (\psi(\varphi))(\ker \varphi) \subset <s_0>   \right\}
$$
implies that, for each $\psi \in T_\Lambda \tilde{J}(F)$, there exists a unique 2-dimensional subspace $A \subset S^*$, with $\langle s_0 \rangle \subset A$, such that $\im \psi(\varphi) \subset A$.
\end{Remark}
\subsection{The technical lemmas}\label{sub-tech}
We will prove now the results of linear algebra that will allow us to find the bound from above we were looking for.\\
Let us fix some notation: let $U$ be a vector space of dimension $r>k$ and $V$ be a vector space of dimension $s$. Let $v_1,\ldots,v_s$ be a basis for $V$. Consider the vector space
$$
\Lambda := \left\{ \varphi \in \Hom(U,V) | \im \varphi \subset <v_1> \:\mbox{and} \: \dim \left( \bigcap_{\varphi \in \Lambda} \ker \varphi \right) = n-k \right\}.
$$
We are able to construct a basis $u_1,\ldots,u_r$ of $U$ and a basis $\varphi_1,\ldots,\varphi_{k+1}$ of $\Lambda$ in order to have
$$
\ker \varphi_i = <u_1,\ldots,u_{i-1},u_{i+1},\ldots,u_r> =: U'_i,
$$
i.e. its kernel is the span of all vectors except $u_i$.\\
Recall that we denoted by $W \subset \Hom(U,V)$ the vector subspace characterized by the following property, that we called the property $P_k$: for every $k+1$ independent vectors $\tilde{u}_1,\ldots,\tilde{u}_{k+1} \in U$ and for every $\tilde{v}_1,\ldots,\tilde{v}_{k+1} \in V$ there exists an element $f \in W$ such that $f(\tilde{u}_j) = \tilde{v}_j$, for every $j = 1,\ldots,k+1$.\\
The main result we need to prove will be accomplished by two steps, each one stated as a technical lemma. Let us start with the first one. 
\begin{Lemma}\label{lem-tec1}
Let $U,V,\Lambda$ and $W$ be defined as before with the basis previously constructed. Let us consider
$$
\tilde{\Gamma} := \left\{ \psi \in \Hom(\Lambda,W) \: | \: (\psi(\varphi_i))(\ker \varphi_i) \subset <v_1>, \: i=1,\ldots,k+1\right\},
$$
then $\dim \tilde{\Gamma} \leq (k+1)(t-(k+1)(s+r-k-3))$ where we denote by $t =\dim W$.
\end{Lemma}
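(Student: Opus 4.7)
The plan is to decompose $\tilde{\Gamma}$ into a direct sum of spaces indexed by the basis of $\Lambda$, and then reduce each summand bound to the rank estimate of Theorem~\ref{thm-rank} on a subordinate Grassmannian.

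First I would observe that any $\psi\in\Hom(\Lambda,W)$ is determined by its values on the basis $\varphi_1,\ldots,\varphi_{k+1}$, and that the defining condition of $\tilde{\Gamma}$ decouples across these basis vectors, yielding a canonical identification
\[
\tilde{\Gamma}\;\cong\;W_1\oplus\cdots\oplus W_{k+1},\qquad W_i\;:=\;\{\,f\in W\mid f(\ker\varphi_i)\subset\langle v_1\rangle\,\}.
\]
Hence it suffices to show $\dim W_i\leq t-(k+1)(s+r-k-3)$ for each $i$. Writing $U'_i:=\ker\varphi_i$ (of dimension $r-1$) and $\pi\colon V\to V/\langle v_1\rangle$ for the quotient, the space $W_i$ is exactly the kernel of the restrict-and-project map
\[
\rho_i\colon W\longrightarrow\Hom(U'_i,V/\langle v_1\rangle),\qquad f\longmapsto\pi\circ f|_{U'_i}.
\]
So $\dim W_i=t-\dim(\im\rho_i)$ and the per-summand bound is equivalent to $\dim(\im\rho_i)\geq(k+1)(r+s-k-3)$. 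A short lifting argument (lift any $k+1$ target vectors from $V/\langle v_1\rangle$ to $V$, apply property $P_k$ for $W$, then project) shows that $\im\rho_i$ inherits property $P_k$ as a subspace of $\Hom(U'_i,V/\langle v_1\rangle)$, with source dimension $r-1$ and target dimension $s-1$.

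The heart of the proof is then the following general dimension estimate, valid in the regime $a,b\geq k+1$: \emph{any $W'\subset\Hom(A,B)$ satisfying $P_k$ with $\dim A=a$, $\dim B=b$ has $\dim W'\geq(k+1)(a+b-k-1)$.} I would prove it by globalising property $P_k$ into a bundle surjection
\[
W'\otimes\OO_{G(k+1,A)}\twoheadrightarrow\UU^\lor\otimes B
\]
on $G(k+1,A)=\GG(k,a-1)$ (the fibre at $U'$ is the surjection $W'\to\Hom(U',B)$ granted by $P_k$), and then dualising the resulting short exact sequence to produce a Steiner resolution
\[
0\longrightarrow B^*\otimes\UU\longrightarrow W'^{\,*}\otimes\OO_{G(k+1,A)}\longrightarrow K^*\longrightarrow 0
\]
in the sense of Definition~\ref{def-Steiner}. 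Applying Theorem~\ref{thm-rank} to $K^*$ then forces $\rk K^*\geq(k+1)(a-1-k)$, and adding $(k+1)b$ to both sides yields the claim. Specialising $a=r-1$, $b=s-1$ produces $\dim(\im\rho_i)\geq(k+1)(r+s-k-3)$, whence $\dim W_i\leq t-(k+1)(s+r-k-3)$, and the stated bound follows upon summation over $i=1,\ldots,k+1$.

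The main obstacle is the general $P_k$-dimension claim itself: the ``Steiner bundle via Grassmannian globalisation'' argument has to be assembled so that $K^*$ is genuinely a Steiner bundle in the sense of Definition~\ref{def-Steiner} with the correct $(s,t)$-parameters before Theorem~\ref{thm-rank} can be invoked, and the boundary regime $b=s-1\leq k$ must be treated separately by appealing to Lemma~\ref{lem-tec3}, which there forces $W'=\Hom(A,B)$ and reduces the inequality to a direct comparison.
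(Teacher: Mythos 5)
Your proof is correct in the relevant regime (the paper implicitly works with $s\geq k+2$, the non-trivial range after Theorem~\ref{teo-classtriv}; for $s\leq k+1$ the stated bound actually fails for both your argument and the paper's, so this is not a defect peculiar to you), but it takes a genuinely different route from the paper's. You first split $\tilde{\Gamma}\cong W_1\oplus\cdots\oplus W_{k+1}$ by observing that the defining condition decouples across the chosen basis of $\Lambda$, describe each $W_i$ as $\ker\rho_i$ for the restrict-and-project map $\rho_i\colon W\to\Hom(\ker\varphi_i,V/\langle v_1\rangle)$, and then note via a lifting argument that $\im\rho_i$ again satisfies property $P_k$ with source dimension $r-1$ and target dimension $s-1$. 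The core of your argument is then a clean general estimate on the dimension of any $P_k$-subspace, which you extract by globalising $P_k$ into a surjection $W'\otimes\OO_G\twoheadrightarrow \UU^\lor\otimes B$ over $G(k+1,A)$, dualising to obtain a Steiner resolution with $(S,T)=(B^*,W'^*)$, and invoking Theorem~\ref{thm-rank}. The paper instead proves the bound by hand: for each fixed $\varphi_i$ it constructs two explicit families $\Psi_{ij}$ and $\Delta_{iqp}$ of elements of $\Hom(\Lambda,W)$, checks directly that they are independent modulo $\tilde{\Gamma}$, and controls the count of the $\Delta$-family by a Porteous degeneracy argument carried out inside the construction. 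Both approaches ultimately rest on Porteous, but yours packages it once and for all inside Theorem~\ref{thm-rank}, so that Lemma~\ref{lem-tec1} becomes essentially a corollary of the rank bound; this is a conceptual simplification and makes the role of $P_k$ more transparent than the paper's explicit families do. When writing it up, the two steps worth spelling out are the dualisation producing a bona fide Definition~\ref{def-Steiner} resolution (so Theorem~\ref{thm-rank} applies with the correct parameters $n=a-1$, $s=b$, $t=\dim W'$), and, as you already flag, the boundary regime $b\leq k$ where Lemma~\ref{lem-tec3} forces $W'=\Hom(A,B)$ and the comparison is direct.
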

\begin{proof}
Let us fix an element $\varphi_i$ of the basis of $\Lambda$ and check out how many independent morphisms, to choose as its image, we can find. We will associate such possible morphisms as the image of $\varphi_i$ and choosing the identically zero morphism as the image of the other morphisms of the basis, we can construct independent elements of $\Hom(\Lambda,W)$ which are not in $\tilde{\Gamma}$ and independent modulo $\tilde{\Gamma}$. Then we will fix, one at a time, each element of the basis of $\Lambda$ and we will repeat such procedure. This technique allows us to give an upper bound for the dimension of $\tilde{\Gamma}$.\\
Remember that at the moment we have fixed one element $\varphi_i$ and we are looking for all its possible images.
Let us take $\tilde{u}_1,\ldots,\tilde{u}_{k+1}$ independent elements of $\ker \varphi_i$ and let us build
$$
\tilde{\varphi}_{ij} 
\left\{
\begin{array}{rcl}
\tilde{u}_i & \mapsto & v_j \\
\tilde{u}_l & \mapsto & 0
\end{array}
\right.
\:\:\:\mbox{for}\: i,l=1,\ldots,k+1; \: j =2,\ldots, s \: \mbox{and} \: l \neq i.
$$
By counting them, we observe we have constructed $(k+1)(s-1)$ independent morphisms modulo $\tilde{\Gamma}$. Let us denote the family of morphisms we have found as
$$
\Psi_{ij} := \left\{
\begin{array}{l}
\varphi_i \mapsto \tilde{\varphi}_{ij}\\
\varphi_l \mapsto 0
\end{array}
\right. 
\:\:\:\mbox{for}\:\: i,l=1,\ldots,k+1, \: j = 2,\ldots, s, \: l\neq i.
$$
We would now like to construct new independent morphisms by using the rest of the elements in the kernel of $\varphi_i$ other that the $k+1$ independent chosen vectors as we have done until now. This is why we now want to add further vectors $\{\tilde{u}_{q}\}_{q=k+2}^{r-1}$ belonging to $\ker \varphi_i$ and independent from the ones we have fixed before.\\ 
Suppose that we have already fixed independent vectors $\tilde{u}_1,\ldots,\tilde{u}_{k+1},\tilde{u}_{k+2},\ldots,\tilde{u}_{q+k+1}$ in $\ker \varphi_i$ and already found $(k+1)(s-1) + (k+1)q$ independent morphisms modulo $\tilde{\Gamma}$, with $q \leq r-k-2$. By the hypothesis on $W$, we can control $k+1$ independent vectors among the $q+k+2$ we are considering. Therefore, we will have new morphisms, independent modulo $\Gamma$, if the following bundle map is not surjective
$$
\xymatrix{
\OO_\GG^{(k+1)(s-1)+(k+1)q+p} \ar[rr] \ar[dr] & & \frac{V}{<v_1>} \otimes \UU^\lor \ar[dl] \\
& G(k+1,q+k+2)
}
$$
By Porteous formula, if we consider the points for which the restriction of the morphism between vector bundles has no maximal rank, then their expected codimension will be $(k+1)q+p+1$ in an ambient space of dimension $\dim G(k+1,q+k+2) = (k+1)(q+1)$. As long as $p+1 \leq k+1$, we can add a new independent morphism; moreover we can repeat this process until we find enough independent vectors that span $\ker \varphi_i$. Let us denote by $\hat{\varphi}_{i,q,p}$ the independent morphism we obtain when we have already fixed $q$ elements in $\ker \varphi_i$ and we have already found $p-1$ new morphisms for this step. Notice that by construction, at each step there exists a vector $\bar{u}_{iqp}$, not belonging to the span $\langle \tilde{u}_1,\ldots, \tilde{u}_{k+1},\tilde{u}_{k+2},\ldots,\tilde{u}_{q-1}\rangle$, such that $\hat{\varphi}_{iqp}(\bar{u}_{iqp})$ is linearly independent, modulo $v_1$, with respect to the set $$\left\{\hat{\varphi}_{ihk}(\bar{u}_{iqp}), \tilde{\varphi}_{ij}(\bar{u}_{iqp}) \:| \: i=1,\ldots, k+1; h \leq q; k = 1,\ldots, k+1 \:\mbox{and}\: k < p \:\mbox{if}\: h = q \right\}.$$
Using the morphisms obtained we are able to construct a family
$$
\Delta_{iqs}:=
\left\{
\begin{array}{l}
\varphi_i \mapsto \hat{\varphi}_{iqp} \\
\varphi_l \mapsto 0
\end{array}
\right.
\:\mbox{for every}\;
\begin{array}{l}
i,l=1,\ldots,k+1\:\:\mbox{with} \:\: l \neq i,\\
 q=k+2,\ldots,r-1\: \mbox{and}\:p=1,\ldots, k+1.
 \end{array}
$$
Observe that we have found $(k+1)(s-1)+(k+1)(r-k-2)$ independent images for each fixed $\varphi_i$ element of the basis of $\Lambda$ and then we have decided to send the other elements of the basis to the zero morphism. Repeating such procedure for each fixed element $\varphi_i$, with $i$ from 1 to $k+1$, we have obtained
$$
(k+1)^2(r+s-k-3)
$$
independent morphisms modulo $\tilde{\Gamma}$. Indeed, consider their linear combination 
$$
\sum_{i=1}^{k+1} \sum_{j=2}^s \lambda_{ij} \Psi_{ij} + \sum_{i=1}^{k+1} \sum_{q=k+2}^{r-1} \sum_{p=1}^{k+1} \mu_{iqp} \Delta_{iqp} = f,
$$
with $f \in \tilde{\Gamma}$, and apply it to a fixed element $\varphi_i$ of the basis of $\Lambda$, in order to get
$$
 \sum_{j=2}^s \lambda_{ij} \tilde{\varphi}_{ij} + \sum_{q=k+2}^{r-1} \sum_{p=1}^{k+1} \mu_{iqp} \hat{\varphi}_{iqp} = f(\varphi_i).
$$
Notice that, by construction, applying such combination to the vectors $\bar{u}_{iqp}$, starting from the top values of the subindex $p$ and $q$, makes all coefficients of type $\mu_{iqp}$ vanish. That is because the image of the corresponding morphism was independent modulo $v_1$ to the images of the previous ones and $(f(\varphi))(\bar{u}_{iqp}) \in <v_1>$. We are left with the combination
$$
\sum_{j=2}^s \lambda_{ij} \tilde{\varphi}_{ij} = f(\varphi_i)
$$
which, applied to element $\tilde{u}_i$, gives us
$$
\sum_{j=2}^s \lambda_{ij} v_j = (f(\varphi_i))(\tilde{u}_i) \subset <v_1>,
$$
hence $\lambda_{ij} =0$. Fixing each element $\varphi_i$, we manage to vanish all the coefficients in the linear combination, hence we have independency modulo $\tilde{\Gamma}$.\\
We can thus state that
$$
\dim \tilde{\Gamma} \leq (k+1)(t-(k+1)(r+s-k-3)),
$$
which concludes the proof. 
\end{proof}
Observe that the bound we just proved only involves one of the two conditions listed in expression (\ref{eq-tang}) that we found in the definition the tangent space given in Theorem \ref{teo-tangjump}. That is why we will now demonstrate a second result that will also involve the second condition.
\begin{Lemma}\label{lem-tec2}
Let $U,V,\Lambda$ and $W$ defined as before and let $\tilde{\Gamma}$ be the vector subspace defined in Lemma \ref{lem-tec1}. Let us consider
$$
\Gamma = \left\{ \psi \in \tilde{\Gamma} | (\psi(\varphi_i))(u_i) \equiv (\psi(\varphi_j))(u_j) \mod v_1, \: \mbox{with}\: i \neq j \:\mbox{from}\: 1 \:\mbox{to}\: k+1\right\},
$$
then $\dim \Gamma \leq \dim \tilde{\Gamma} - k(k+1)$.
\end{Lemma}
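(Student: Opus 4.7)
The plan is to mimic Lemma \ref{lem-tec1} and produce $k(k+1)$ elements of $\tilde{\Gamma}$ that are linearly independent modulo $\Gamma$, which at once gives $\dim\tilde{\Gamma}-\dim\Gamma\geq k(k+1)$.

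First I will fix $k+1$ vectors $\bar{v}_1,\ldots,\bar{v}_{k+1}\in V$ whose classes modulo $v_1$ are linearly independent in $V/\langle v_1\rangle$. For each pair $(l,m)$ with $l\in\{2,\ldots,k+1\}$ and $m\in\{1,\ldots,k+1\}$ --- exactly $k(k+1)$ pairs --- I plan to define $\eta_{lm}\in\Hom(\Lambda,W)$ by declaring $\eta_{lm}(\varphi_l)=g_{lm}$ and $\eta_{lm}(\varphi_i)=0$ for $i\neq l$, where $g_{lm}\in W$ is to be chosen so that $g_{lm}(u_l)\equiv\bar{v}_m\pmod{v_1}$ while $g_{lm}(u_j)\in\langle v_1\rangle$ for every $j\neq l$. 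Once such a $g_{lm}$ is in hand, the defining condition $(\eta_{lm}(\varphi_i))(\ker\varphi_i)\subset\langle v_1\rangle$ of $\tilde{\Gamma}$ is immediate: it is trivial for $i\neq l$, and for $i=l$ it reduces, since $\ker\varphi_l$ is the span of the $u_j$ with $j\neq l$, exactly to the requirement imposed on $g_{lm}$.

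The technical heart of the argument is producing such $g_{lm}\in W$, and here the hypothesis $P_k$ must be pushed in exactly the same Porteous-style manner as in the construction of the morphisms $\hat{\varphi}_{iqp}$ of Lemma \ref{lem-tec1}. Property $P_k$ directly supplies some $f\in W$ with $f(u_l)\equiv\bar{v}_m\pmod{v_1}$ and $f(u_{j_1})=\cdots=f(u_{j_k})=0$ for any chosen $k$ indices $j_s\neq l$; the remaining $r-k-1$ coordinates must then be forced into $\langle v_1\rangle$ by a rank-drop count on an associated map of vector bundles over a suitable Grassmannian, using the surplus dimension of $W$ to correct $f$ by elements of $W$ that vanish at the $k+1$ already specified positions. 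I expect this step to be the main obstacle, since one must verify that the relevant rank-drop locus is actually non-empty in precisely the regime where the conditions of the lemma apply.

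Finally, independence modulo $\Gamma$ follows by a direct check. Assume $\sum_{l,m}c_{lm}\eta_{lm}\in\Gamma$; applying the defining equivalence $(\psi(\varphi_1))(u_1)\equiv(\psi(\varphi_j))(u_j)\pmod{v_1}$ to this combination gives $0\equiv\sum_{m=1}^{k+1}c_{jm}\bar{v}_m\pmod{v_1}$ for each $j=2,\ldots,k+1$, since $\eta_{lm}(\varphi_1)=0$ for every $(l,m)$ and only the terms with $l=j$ contribute to the $j$-th coordinate. The linear independence of the $\bar{v}_m$ modulo $v_1$ then forces every $c_{jm}=0$, producing the required family of $k(k+1)$ morphisms and with it the claimed codimension bound.
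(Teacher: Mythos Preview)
Your overall architecture is exactly the paper's: produce, for each $l\in\{2,\dots,k+1\}$, elements $g\in W$ with $g(\ker\varphi_l)\subset\langle v_1\rangle$ whose values $g(u_l)$ span a $(k+1)$-dimensional space modulo $v_1$, and use these as images of $\varphi_l$ (sending the other $\varphi_j$ to $0$) to get $k(k+1)$ elements of $\tilde{\Gamma}$ independent modulo $\Gamma$. Your independence check at the end is correct.

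The gap is in the construction of the $g_{lm}$, and the proposed Porteous-style correction does not do the job. In Lemma~\ref{lem-tec1} Porteous is used to show that a certain bundle map over a Grassmannian is \emph{not} surjective at some point, hence that a further morphism independent of the previously chosen ones exists; the output is a morphism whose value at a \emph{new} vector lies \emph{outside} the span generated so far. Here you need the opposite: a morphism whose values at \emph{all} of the $r-1$ vectors $u_j$ ($j\neq l$) lie \emph{inside} $\langle v_1\rangle$. Your ``correct $f$ by elements vanishing at the $k+1$ specified positions'' is an affine condition, and whether the required coset is hit is not a rank-drop question; a naive dimension count in fact fails for small $t$. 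The paper avoids constructing explicit $g_{lm}$ altogether. It observes that the space of admissible images of $\varphi_l$, modulo those giving elements of $\Gamma$, is exactly $\ker\alpha/\ker\beta$, where $\alpha:W\to\Hom(\ker\varphi_l,V/\langle v_1\rangle)$ and $\beta:W\to\Hom(U,V/\langle v_1\rangle)$ are the obvious maps, and then bounds $\dim\ker\alpha-\dim\ker\beta=\dim\im\beta-\dim\im\alpha$ from below by $k+1$ using Lemma~\ref{lem-tec3}: passing to a $(k+1)$-dimensional quotient $Q$ of $V$ with $v_1\mapsto 0$, the composite $W\to\Hom(U,Q)$ is surjective, which forces the required gap between $\dim\im\beta$ and $\dim\im\alpha$. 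Replace your Porteous step by this surjectivity argument.
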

\begin{proof}
We will prove this lemma with the same idea we used to prove the previous one, i.e. we will look for morphisms in $\tilde{\Gamma}$ that are independent modulo $\Gamma$. Consider a fixed element $\varphi_i$ of the basis of $\Lambda$ and the following commutative diagram
$$
\xymatrix{
0 \ar[r] & \ker \alpha \ar[r] & W \ar[d] \ar[r]^(.3){\alpha} & \Hom\left(\ker \varphi_i, \frac{V}{<v_1>}\right) \ar[d] \\
0 \ar[r] & \ker \beta \ar@{^{(}->}[u] \ar[r] & W \ar[r]^(.3){\beta} & \Hom\left(U, \frac{V}{<v_1>}\right) 
}
$$
Notice that the number of the morphisms we are looking for is equal to the number of independent morphisms of $\frac{\ker \alpha}{\ker \beta}$, so we must compute $$\dim (\ker \alpha) - \dim (\ker \beta) = \dim(\im \beta) - \dim (\im \alpha).$$
For our vector space $V$ of dimension $s$, let us consider a quotient $$Q = \frac{V}{<v_1,\bar{v}_2,\ldots,\bar{v}_{s-k-1}>}$$ in order to have $\dim Q = k+1$ and construct elements $\{\tilde{v}_{s-k},\ldots,\tilde{v}_s\}$ which are linearly independent modulo $<v_1,\bar{v}_2,\ldots,\bar{v}_{s-k-1}>$; then take a look at the composition
$$
\xymatrix{
W \ar[r]^(.3){\beta} \ar@/_3pc/[rr]^f & \Hom(U,\frac{V}{<v_1>}) \ar[r]^{\pi} & \Hom(U,Q).
}
$$
By Lemma \ref{lem-tec3} we know that $f$ is surjective, so if we consider the $k+1$ independent morphisms
$$
\left\{\delta_j\right\}_{j=1}^{k+1} \subset \Hom\left(U,\frac{V}{<v_1>}\right) \:\:\mbox{with}\:\: \delta_j\left(\frac{U}{\ker \varphi_i}\right) = \tilde{v}_{s-k-1+j},
$$
we have that the set $\{\pi(\delta_j)\}_{j=1}^{k+1}$ is also independent considered in $\Hom(U,Q)$ and each morphism of the set must have at least one preimage in $W$, independent among them modulo $\im \alpha$ by construction. Observe that we found at least $k+1$ independent morphisms for the fixed element $\varphi_i$. If we use the explained technique for each $\varphi_i$ fixed, with $i=2,\ldots,k+1$, while we always send the element $\varphi_1$ to the zero morphism, in order to guarantee the independency modulo $\Gamma$, we get $k(k+1)$ morphisms that belong to $\tilde{\Gamma}$ and that are independent modulo $\Gamma$. We can thus say that
$$
\dim \Gamma \leq \dim \tilde{\Gamma} - k(k+1)
$$
which concludes the proof.
\end{proof}
Having proved this technical results we are finally ready to give the requested upper bound for the dimension of the tangent space.
\begin{Theorem}\label{thm-tang}
Let $F$ be a reduced Steiner bundle on $\GG(k,n)$ defined by the injective map $S \otimes \UU \longrightarrow T \otimes \OO_\GG$ and let $\Lambda$ be a jumping pair for $F$, i.e. $\Lambda \in \tilde{J}(F)$, then the dimension of the tangent space of the jumping locus variety at the point $\Lambda$ is bounded by
$$
\dim T_\Lambda\tilde{J}(F) \leq (k+1)(t-(k+1)(s+n-k-1)-k),
$$
where as usual $s = \dim S$ and $t = \dim T$.\\
\end{Theorem}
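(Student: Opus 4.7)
The plan is to translate the description of $T_\Lambda \tilde J(F)$ given in Theorem \ref{teo-tangjump} into the linear-algebra framework of Section \ref{sub-tech}, apply Lemmas \ref{lem-tec1} and \ref{lem-tec2} in sequence, and then subtract $\dim \End(\Lambda)$ in order to pass from $\Hom(\Lambda, T^*)$ down to $\Hom(\Lambda, T^*/\Lambda)$.

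First I would set $U = H^0(\UU^\lor)^*$, so that $r := \dim U = n+1$, together with $V = S^*$ and $W = T^*$. Since $F$ is reduced we have $T_0^* = T^*$, and by Lemma \ref{lem-def} the subspace $W \hookrightarrow \Hom(U,V)$ induced by $\varphi$ satisfies property $P_k$. The jumping pair $\Lambda$ is a $(k+1)$-dimensional space of rank-one morphisms with common image $\langle v_1 \rangle = \langle s_0 \rangle$, and the basis $\{\varphi_i\}_{i=1}^{k+1}$ and $\{u_j\}_{j=1}^{n+1}$ singled out in the proof of Theorem \ref{teo-tangseg} satisfies $\bigcap_i \ker \varphi_i = \langle u_{k+2}, \ldots, u_{n+1}\rangle$ of dimension $n - k$, so the hypotheses of the two technical lemmas are met.

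I would then apply Lemma \ref{lem-tec1} with $r = n+1$ to obtain
\[
\dim \tilde \Gamma \;\leq\; (k+1)\bigl(t - (k+1)(n+s-k-2)\bigr),
\]
and Lemma \ref{lem-tec2} to sharpen this to
\[
\dim \Gamma \;\leq\; (k+1)\bigl(t - (k+1)(n+s-k-2)\bigr) - k(k+1).
\]
The last step is to relate $\Gamma \subset \Hom(\Lambda, T^*)$ to the tangent space $T_\Lambda \tilde J(F) \subset \Hom(\Lambda, T^*/\Lambda)$. The two conditions cutting out $\Gamma$ are exactly those in the description of Theorem \ref{teo-tangjump}, and they lift consistently between the two spaces because every element of $\Lambda$ is rank one with image in $\langle v_1\rangle$. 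In particular $\End(\Lambda) \subset \Gamma$, so the canonical projection
\[
\Gamma \twoheadrightarrow T_\Lambda \tilde J(F)
\]
is surjective with kernel $\End(\Lambda)$ of dimension $(k+1)^2$. Assembling everything gives
\[
\dim T_\Lambda \tilde J(F) \;\leq\; (k+1)\bigl(t - (k+1)(n+s-k-2)\bigr) - k(k+1) - (k+1)^2,
\]
which collapses to the claimed bound via the identity $(k+1)(n+s-k-2) + (2k+1) = (k+1)(n+s-k-1) + k$.

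The main obstacle is really bookkeeping rather than a substantive difficulty: one must check that $\Gamma$ is genuinely the full preimage of $T_\Lambda \tilde J(F)$ under the quotient by $\Lambda$, and that $\End(\Lambda)$ does lie inside $\Gamma$. Both reduce immediately to the rank-one structure of the elements of $\Lambda$, after which the theorem is a clean composition of Lemmas \ref{lem-tec1} and \ref{lem-tec2} with a one-line algebraic simplification.
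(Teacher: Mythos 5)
Your proof is correct and follows essentially the same route as the paper: identify $U=H^0(\UU^\lor)^*$, $V=S^*$, $W=T^*$, invoke Lemma \ref{lem-tec2} (built on Lemma \ref{lem-tec1}), and pass to the quotient by $\Lambda$. The paper's proof is a one-liner that simply cites Lemma \ref{lem-tec2}, whereas you have usefully spelled out the hypothesis check, the arithmetic with $r=n+1$, and the final subtraction of $\dim\End(\Lambda)=(k+1)^2$ needed to land in $\Hom(\Lambda,T^*/\Lambda)$ and reconcile the exponents — all of which the paper leaves implicit but which are necessary to get the stated bound.
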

\begin{proof}
Notice that the geometrical description of the tangent space given in the expression (\ref{eq-tang}) is a set whose conditions satisfy exactly the ones requested in the hypothesis of Lemma \ref{lem-tec2}. Applying the lemma, with the values set by the bundle, gives us the result.
\end{proof}

\section{The classification}\label{sec-class}
In this section we will give the complete classification of Steiner bundles whose jumping locus has maximal dimension and we will find a triple $(X,L,M)$ in order to describe them as Schwarzenberger.\\ We will classify the case $s=k+2$, which will be fundamental for us, because we will present a technique of induction that will allow us to classify Steiner bundles starting from such basic case.\\
In Theorem \ref{teo-classtriv} we have seen that every $(s,t)$-Steiner bundle with $s\leq k+1$ is essentially trivial. In this section we classify the first non trivial case, when $s=k+2$. To prove such result we will use the following observation.
\begin{Remark}\label{rem-dual}
Recall that Lemma \ref{lem-def} and Lemma \ref{lem-dual} implied that every Steiner bundle $F$ on $\GG(k,n)$ is equivalent to a Steiner bundle $\tilde{F}$ on $\GG(k,\PP(S))$.
\end{Remark}
\begin{Theorem}
Let $F$ be a reduced Steiner bundle over $\GG(k,n)$, with $\dim S = k+2$, then $F$ can be seen as the Schwarzenberger bundle given by the triple $(\PP^{k+1}, \OO_{\PP^{k+1}}(1), E^\lor (-1))$, where we identify $\PP(S)=\PP^{k+1}$ and $E$ is the vector bundle defined as the kernel of the surjective morphism $$H^0(\UU^\lor) \otimes \OO_{\PP(S)}(-1) \longrightarrow \frac{S^* \otimes H^0(\UU^\lor)}{T^*} \otimes \OO_{\PP(S)}.$$
\end{Theorem}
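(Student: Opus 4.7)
My strategy is to construct $E$ explicitly and then use Lemma \ref{lem-def} to verify that the Schwarzenberger bundle associated to the triple $(\PP^{k+1}, \OO_{\PP^{k+1}}(1), E^\lor(-1))$ is defined by the same linear map $\varphi\colon T^* \hookrightarrow S^* \otimes H^0(\UU^\lor)$ that defines $F$. I would organize the work in three stages: checking that $E$ is genuinely a vector bundle, identifying the relevant cohomology spaces, and matching the multiplication map with $\varphi$.

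\textbf{Stage 1 (Surjectivity).} I must show that the sheaf map $H^0(\UU^\lor) \otimes \OO_{\PP(S)}(-1) \to W \otimes \OO_{\PP(S)}$, with $W = (S^* \otimes H^0(\UU^\lor))/T^*$, is surjective at every point. At $a = \langle v_1 \rangle \in \PP(S) = G(1, S^*)$ this is equivalent to the restriction map $T^* \to \Hom(H_1, H^0(\UU^\lor))$ being surjective, where $H_1 = \ker(v_1) \subset S$ is the hyperplane annihilated by $v_1$. Because $\dim S = k+2$, we have $\dim H_1 = k+1$, and this surjectivity is exactly property $P_k$ for $T^*$ viewed inside $\Hom(S, H^0(\UU^\lor))$, which holds by Lemma \ref{lem-dual} applied to $T^* \subset \Hom(H^0(\UU^\lor)^*, S^*)$ (the latter satisfying $P_k$ by Lemma \ref{lem-def} since $F$ is Steiner). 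This is the step in which the assumption $s = k+2$ plays the crucial role.

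\textbf{Stage 2 (Cohomology).} Dualizing the defining sequence and twisting by $\OO_{\PP(S)}(-1)$ yields
$$0 \to W^* \otimes \OO_{\PP(S)}(-1) \to H^0(\UU^\lor)^* \otimes \OO_{\PP(S)} \to E^\lor(-1) \to 0.$$
Since $H^i(\OO_{\PP^{k+1}}(-1)) = 0$ for $i = 0, 1$, the long exact sequence gives a canonical isomorphism $H^0(E^\lor(-1)) \cong H^0(\UU^\lor)^*$; in particular $h^0(E^\lor(-1)) = n+1$, so the Grassmannian $G(k+1, H^0(E^\lor(-1)))$ on which the Schwarzenberger bundle lives is naturally $\GG(k,n)$. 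From the untwisted dual $0 \to W^* \otimes \OO_{\PP(S)} \to H^0(\UU^\lor)^* \otimes \OO_{\PP(S)}(1) \to E^\lor \to 0$ and $H^1(\OO_{\PP(S)}) = 0$ one obtains $0 \to W^* \to H^0(\UU^\lor)^* \otimes S \to H^0(E^\lor) \to 0$; comparing with the dual of the defining sequence of $W$, namely $0 \to W^* \to S \otimes H^0(\UU^\lor)^* \to T \to 0$, identifies $H^0(E^\lor) \cong T$ and its dual with the image $T^* \subset S^* \otimes H^0(\UU^\lor)$ of $\varphi$.

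\textbf{Stage 3 (Matching).} The sheaf surjection $H^0(\UU^\lor)^* \otimes \OO_{\PP(S)}(1) \to E^\lor$ induces on global sections the multiplication map $H^0(\OO_{\PP(S)}(1)) \otimes H^0(E^\lor(-1)) \to H^0(E^\lor)$; via the identifications of Stage 2 this becomes exactly the quotient $S \otimes H^0(\UU^\lor)^* \to T$, whose dual is $\varphi$. Hence by Lemma \ref{lem-def} the Schwarzenberger bundle attached to $(\PP^{k+1}, \OO_{\PP^{k+1}}(1), E^\lor(-1))$ is defined by the same linear map as $F$ and is therefore isomorphic to it. The main obstacle is Stage 1, since every subsequent step is a formal consequence of the Euler-type defining sequence of $E$ and standard cohomology computations on $\PP^{k+1}$.
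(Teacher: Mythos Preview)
Your proof is correct and follows the same path as the paper's, which packages your three stages into a single $3\times 3$ commutative diagram on $\PP(S)$: its middle row is the Euler sequence tensored with $H^0(\UU^\lor)$, its middle column is the inclusion $T^*\hookrightarrow S^*\otimes H^0(\UU^\lor)$ tensored with $\OO_{\PP(S)}$, and the snake lemma yields simultaneously the left column defining $E$ and a top row $0\to E\to T^*\otimes\OO_{\PP(S)}\to T_{\PP(S)}(-1)\otimes H^0(\UU^\lor)\to 0$. Your Stage~1 (surjectivity via Lemma~\ref{lem-dual}, using $\dim S=k+2$) is precisely the verification that this diagram exists, and the paper appeals to the same duality (Remark~\ref{rem-dual}); your Stages~2 and~3 spell out the cohomological identifications $H^0(E^\lor(-1))\cong H^0(\UU^\lor)^*$, $H^0(E^\lor)\cong T$ and the matching of the multiplication map with $\varphi$, all of which the paper compresses into the phrase ``gives us the requested triple.'' The diagram is more compact; your version is more explicit about why the Schwarzenberger data recover~$F$.
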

Observe because of the bounds obtained in inequality (\ref{lowdim}) and Theorem \ref{thm-tang}, $\tilde{J}(F)$ is a smooth complete intersection of dimension $\dim \tilde{J}(F) = (k+1)(t-(k+1)(n+1)-k).$
\begin{proof}
Consider the following commutative diagram, observing that $\OO_{\GG(k, \PP(S^*)} \simeq \OO_{\PP(S)}$,
$$
\xymatrix{
& 0 \ar[d] & 0 \ar[d] \\
0 \ar[r] & E \ar[r] \ar[d] & T^* \otimes \OO_{\PP(S)} \ar[r] \ar[d] & T_{\PP(S)} (-1) \otimes H^0(\UU^\lor) \ar[r] \ar@{=}[d] & 0 \\
0 \ar[r] & H^0(\UU^\lor) \otimes \OO_{\PP(S)}(-1) \ar[r] \ar[d] & S^* \otimes H^0(\UU^\lor) \otimes \OO_{\PP(S)} \ar[r] \ar[d] & T_{\PP(S)}(-1) \otimes H^0(\UU^\lor) \ar[r] & 0 \\
& \frac{S^* \otimes H^0(\UU^\lor)}{T^*} \otimes \OO_{\PP(S)} \ar[d] \ar@{=}[r] & \frac{S^* \otimes H^0(\UU^\lor)}{T^*} \otimes \OO_{\PP(S)} \ar[d] \\
& 0 & 0
}
$$
that gives us the requested triple and thus the requested description. The classification is complete due to Remark \ref{rem-dual}.
\end{proof}
Some remarks: the fiber of $E$, for a point $s_0 \in \PP(S)$, is the vector space
$
E_{s_0} = \left\{\left(<s_0> \otimes H^0(\UU^\lor)\right) \cap T^* \right\},
$
that is the set we need to study in order to find jumping pairs. In fact we have that
$
\tilde{J}(F) = G(k+1,E)
$
seen as a Grassmann bundle. By this notation we mean that all jumping pairs of $F$ with the first component $s_0$ are given by all vector subspaces of dimension $k+1$ of the fiber $E_{s_0}$.\\
Let us observe also that  $E^\lor(-1)$ is a Steiner bundle on $\PP(S)$ given by the resolution
$$
0 \longrightarrow \frac{S^* \otimes H^0(\UU^\lor)}{T^*} \otimes \OO_{\PP(S)}(-1) \longrightarrow H^0(\UU^\lor) \otimes \OO_{\PP(S)} \longrightarrow E^\lor (-1) \longrightarrow 0,
$$
In the particular case $k=0$ we have $E^\lor(-1) = \bigoplus_{i=1}^{t-s+n-1}\OO_{\PP^1}(a_i)$ with degrees $a_i \geq 1$.\\
\subsection{The induction tecnique}
Now that we have studied and understood the particular case $s=k+2$, we are ready to explain the induction process that will allow us to classify Steiner bundles with maximal jumping locus.\\
Let us consider the morphism $\varphi: T^* \longrightarrow S^* \otimes H^0(\UU^\lor)$ which defines the Steiner bundle $F$ and let us fix a jumping pair $s_o \otimes \Gamma \in \tilde{J}(F)$, with $s_0 \otimes \Gamma = \varphi(\Lambda)$. We can induce the commutative diagram
\begin{equation}\label{eq-induc}
\xymatrix{
T^* \ar@{^{(}->}[r]^(.3){\varphi} \ar@{>>}[d]_{\tilde{pr}} & S^* \otimes H^0(\UU^\lor) \ar@{>>}[d]^{pr \otimes id} \\
\frac{T^*}{\Lambda} \ar[r]^(.3){\varphi '} & \frac{S^*}{<s_0>} \otimes H^0(\UU^\lor)
}
\end{equation}
First of all $\varphi '$ defines a Steiner bundle $F'$ on $\GG(k,n)$ of type $(s-1,t-k-1)$, because by the commutativity of the above diagram, the map $\varphi '$ satisfies the Steiner properties. Nevertheless, we must be careful because the bundle $F'$ may not be reduced. In order to achieve the classification, we will apply induction till arriving at the known case $s=k+2$ and get information of the original bundle through the induction steps we have processed.\\

Remember that we had two canonical projections: the map $\pi_1 : \tilde{J}(F) \rightarrow \PP(S)$, with $\pi_1(\tilde{J}(F)) = \Sigma(F)$, and the map $\pi_2 : \tilde{J}(F) \rightarrow \GG(k,\PP(H^0(\UU^\lor)^*))$, with $\pi_2(\tilde{J}(F)) = J(F)$.
\begin{Proposition}\label{proprieta1}
Let $F$ be a Steiner bundle over $\GG(k,n)$ and $F'$ the one induced as above by fixing $s_0 \otimes \Gamma$ jumping pair, then
\begin{description}
\item[(i)] $J(F) \subset J(F') \cup \pi_2 (\pi_1^{-1} (s_0)) $;
\item[(ii)] considering the set of all jumping pairs corresponding to the fixed $s_0$, we denote by $\mathcal{L}_{s_0} = \PP \left( (s_0 \otimes H^0(\UU^\lor) \cap T^*)^* \right)$ and we obtain a projection 
$$
pr_{(s_0,\Gamma)} : \GG(k,\PP(T)) \longrightarrow \GG(k,\PP(T_0'))
$$
where $(T_0')^*$ denotes the image of the map $\varphi'$ and whose center of projection is $\GG(k,\mathcal{L}_{s_0})$. Moreover we have a natural projection
$$
pr_{s_0} : \PP(S) \longrightarrow \PP\left(\left(\frac{S^*}{<s_0>}\right)^*\right).
$$
and for every jumping pair $(s,\bar{\Gamma})$ with $s \neq s_0$ we have that
$
pr_{(s_0,\Gamma)}(s,\bar{\Gamma}) = \left(pr_{s_0}(s),\bar{\Gamma}\right);
$
\item[(iii)] $pr_{(s_0,\Gamma)} (\tilde{J}) \subset \tilde{J}(F_0')$ where $\tilde{J}$ is an irreducible component  of $\tilde{J}(F)$ not in $\pi_1^{-1}(s_0)$;
\item[(iv)] $pr_{s_0} (\Sigma(F)) \subset \Sigma(F_0')$ if $\Sigma(F) \neq \{s_0\}$ or if the generic component is different from $\{s_0\}$.
\end{description}
\end{Proposition}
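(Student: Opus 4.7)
My plan is to treat the four parts in order, since (iii) and (iv) follow almost immediately from the setup provided by (i) and (ii).

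For (i), I would do a direct diagram chase on (\ref{eq-induc}). Given $\bar{\Gamma} \in J(F)$, some $s \in S^*$ satisfies $s \otimes \bar{\Gamma} \in \im \varphi$. If $\langle s \rangle = \langle s_0 \rangle$, then $\bar{\Gamma} \in \pi_2(\pi_1^{-1}(s_0))$ and we are done. Otherwise the residue $\bar{s}$ of $s$ in $S^*/\langle s_0 \rangle$ is nonzero, and commutativity of (\ref{eq-induc}) forces $(pr \otimes id)(s \otimes \bar{\Gamma}) = \bar{s} \otimes \bar{\Gamma} \in \im \varphi'$, so $(\bar{s}, \bar{\Gamma})$ is a jumping pair of $F'$ and $\bar{\Gamma} \in J(F')$.

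For (ii), I would read off the two projections directly from (\ref{eq-induc}). On the source side, $pr_{s_0}$ is the standard linear projection of $\PP(S)$ away from the point determined by $s_0$, dual to the quotient $S^* \twoheadrightarrow S^*/\langle s_0 \rangle$. On the target side, the composite $T^* \twoheadrightarrow T^*/\Lambda \stackrel{\varphi'}{\longrightarrow} T_0'^*$ induces a linear projection $\PP(T) \dashrightarrow \PP(T_0')$; after reducing to $F_0$ via Remark \ref{rmk-reduced} so that $\varphi$ is injective, an element of $T^*$ lies in the kernel of this composite exactly when its $\varphi$-image lies in $\langle s_0 \rangle \otimes H^0(\UU^\lor)$, i.e., in $s_0 \otimes H^0(\UU^\lor) \cap T^*$. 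Projectively this kernel is $\mathcal{L}_{s_0}$, and passing to $(k+1)$-planes yields the Grassmannian projection $pr_{(s_0,\Gamma)}$ with indeterminacy along $\GG(k, \mathcal{L}_{s_0})$. The compatibility formula then reduces to naturality of the tensor quotient: for a jumping pair $(s, \bar{\Gamma})$ with $\langle s \rangle \neq \langle s_0 \rangle$, the $(k+1)$-plane $\varphi^{-1}(s \otimes \bar{\Gamma}) \subset T^*$ meets $\mathcal{L}_{s_0}$ trivially, and its image under the projection realizes the pair $(\bar{s}, \bar{\Gamma}) = (pr_{s_0}(s), \bar{\Gamma})$.

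For (iii), let $\tilde{J}$ be an irreducible component of $\tilde{J}(F)$ not contained in $\pi_1^{-1}(s_0)$. Then $\tilde{J} \cap \pi_1^{-1}(s_0)$ is a proper closed subset, so its complement $U \subset \tilde{J}$ is dense and open; on $U$, $pr_{(s_0,\Gamma)}$ is regular, and by (i) together with Lemma \ref{lem-jumprid} (which gives $\tilde{J}(F') = \tilde{J}(F_0')$) the image sits in the closed set $\tilde{J}(F_0')$. Taking closure yields $pr_{(s_0,\Gamma)}(\tilde{J}) \subset \tilde{J}(F_0')$. For (iv), I would apply the same reasoning to $\Sigma$: under the hypothesis, a generic $s$ in a component of $\Sigma(F)$ other than $\{s_0\}$ satisfies $\langle s \rangle \neq \langle s_0 \rangle$ and lifts to a jumping pair $(s, \bar{\Gamma})$; then $pr_{s_0}(s) = \bar{s} \in \Sigma(F_0')$ by (i)--(ii), and closing up gives $pr_{s_0}(\Sigma(F)) \subset \Sigma(F_0')$.

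The main technical obstacle I anticipate is verifying (ii) cleanly, in particular identifying the kernel of the composite $T^* \to T_0'^*$ with $s_0 \otimes H^0(\UU^\lor) \cap T^*$ and handling the possibly non-reduced case by reduction to $F_0$. Once the center of projection is correctly identified as $\mathcal{L}_{s_0}$ and the naturality formula is in place, the remaining parts are essentially diagram chases against (\ref{eq-induc}) combined with elementary closure arguments.
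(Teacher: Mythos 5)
Your proposal is correct and follows essentially the same route as the paper: a diagram chase on (\ref{eq-induc}) for (i), identification of the kernel of the composite $T^*\to (T_0')^*$ with $(s_0\otimes H^0(\UU^\lor))\cap T^*$ for (ii), and commutativity plus closure for (iii)--(iv). You spell out some steps the paper leaves implicit — in particular the reduction to $F_0$ so $\varphi$ is injective, the use of Lemma~\ref{lem-jumprid} to pass from $\tilde J(F')$ to $\tilde J(F_0')$, and the verification that a generic $\Lambda$ in a component not contained in $\pi_1^{-1}(s_0)$ meets $\mathcal{L}_{s_0}$ trivially so $pr_{(s_0,\Gamma)}$ is regular there — but the underlying argument is the same.
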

\begin{proof}
Consider an element $s \otimes \bar{\Gamma} \in \tilde{J}(F)$, if $s \neq s_0$ then $pr_{s_0}(s) \otimes \tilde{\Gamma} \in \tilde{J}(F')$, hence $\bar{\Gamma} \in J(F')$; if $s = s_0$ then obviously $\bar{\Gamma} \in \pi_2(\pi_1^{-1}(s_0))$, so (i) is proved. To prove (ii) notice that the kernel of the map $$(pr \otimes id) \circ \varphi : T^* \longrightarrow \frac{S^*}{<s_0>} \otimes H^0(\UU^\lor),$$ described in diagram \ref{eq-induc}, is exactly $\pi_1^{-1}(s_0)$, so its projectivization will be $\mathcal{L}_0$, that represents the center of the given projection. Parts (iii) and (iv) come automatically from the commutativity of diagram (\ref{eq-induc}).
\end{proof}
The next proposition shows us how the property of having a jumping locus of maximal dimension is maintained during the induction and explains the relations between the sets $\Sigma(F)$ and $J(F)$ and their respective sets $\Sigma(F')$ and $J(F')$ in the induced bundle.
\begin{Proposition}\label{proprieta2}
Let $F$ be a reduced Steiner bundle over $\GG(k,n)$ defined by the morphism $\varphi$ and let $\tilde{J}(F)$ have maximal dimension. Let $F'$ be the bundle induced by $F$, once fixed the jumping pair $s_0 \otimes \Gamma \in \tilde{J}(F)$, and let $F_0'$ be its reduced summand. Let $\tilde{J}_0$ be an irreducible component of $\tilde{J}(F)$ of maximal dimension such that $s_0 \otimes \Gamma \in \tilde{J}_0$, then
\begin{description}
\item[(i)] the image of both $\tilde{J}_0$ and $\tilde{J}(F)$ under $pr_{(s_0,\Gamma)}$ has dimension
$$
(k+1)(t-(k+1)(s+n-k-1)-k) - (k+1)(l_0-k),
$$
where $l_0 = \dim \mathcal{L}_0$.
\item[(ii)] $\dim \tilde{J}(F_0') = (k+1)(t-(k+1)(s+n-k-1)-k) - (k+1)(l_0-k)$;
\item[(iii)] If $\tilde{J}(F_0')$ is irreducible then
\begin{description}
\item [a)] $\tilde{J}(F_0') = pr_{(s_0,\Gamma)} (\tilde{J}(F))$,
\item[b)] $\tilde{J}(F)$ is irreducible,
\item[c)] $J(F) = J(F')$,
\item[d)] $\Sigma(F') = pr_{s_0}(\Sigma(F))$, so it is a projection from an inner point.
\end{description}
\end{description}
\end{Proposition}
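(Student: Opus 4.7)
The plan is to extract $F_0'$ as a reduced Steiner bundle of controlled parameters and then squeeze $\dim \tilde{J}(F_0')$ between a tangent-space upper bound and the image of $\tilde{J}_0$ under projection. By tracking $\varphi'$, the preimage of $s_0 \otimes H^0(\UU^\lor)$ in $T^*$ (via $\varphi$) has dimension $l_0+1$, so after quotienting by $\Lambda$ it contributes a $(l_0-k)$-dimensional kernel to $\varphi'$, giving $\dim (T_0')^* = t - l_0 - 1$. Thus $F_0'$ is a reduced $(s-1, t-l_0-1)$-Steiner bundle on $\GG(k,n)$, and Theorem \ref{thm-tang} applied with these parameters expands directly to
\[
\dim \tilde{J}(F_0') \;\leq\; (k+1)\bigl(t-(k+1)(s+n-k-1)-k\bigr) - (k+1)(l_0-k).
\]

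For the matching lower bound (which gives both (i) and (ii) simultaneously), note that $\dim \pi_1^{-1}(s_0) = (k+1)(l_0-k)$ is strictly smaller than $\dim \tilde{J}_0$, so $\tilde{J}_0 \not\subset \pi_1^{-1}(s_0)$ and Proposition \ref{proprieta1}(iii) ensures $pr_{(s_0,\Gamma)}(\tilde{J}_0) \subset \tilde{J}(F_0')$. A fiber analysis of $pr_{(s_0,\Gamma)}$ restricted to the rank-one Segre locus $\tilde{J}_0$ — where the preimage of a generic $(\bar s, \bar\Gamma') \in \tilde{J}(F_0')$ consists of rank-one tensors $(s, \bar\Gamma')$ with $s$ on the line through $s_0$ and a lift of $\bar s$, further cut by the linear condition $s \otimes \bar\Gamma' \subset T^*$ — combined with $\dim \tilde{J}_0 = (k+1)(t-(k+1)(s+n-k-1)-k)$, forces the equality at the claimed common value. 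Applying the same analysis to any other maximal component of $\tilde{J}(F)$ not contained in $\pi_1^{-1}(s_0)$ extends the equality to $pr_{(s_0,\Gamma)}(\tilde{J}(F))$.

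For (iii), assume $\tilde{J}(F_0')$ irreducible. Part (a) follows because each maximal component of $\tilde{J}(F)$ (outside $\pi_1^{-1}(s_0)$) projects to an irreducible subvariety of $\tilde{J}(F_0')$ of the same dimension, hence onto all of it. Part (b) uses the tangent-space bound from Theorem \ref{thm-tang} at every jumping pair: two distinct maximal components meeting at $\Lambda$ would make $\dim T_\Lambda \tilde{J}(F)$ exceed the bound, so distinct maximal components are disjoint; combined with the fact that all of them fiber onto the irreducible $\tilde{J}(F_0')$ via $pr_{(s_0,\Gamma)}$ with connected linear fibers, this forces a single component. Part (c) is immediate from the commutativity $\pi_2' \circ pr_{(s_0,\Gamma)} = \pi_2$ (second coordinate is preserved) together with (a). Part (d) refines Proposition \ref{proprieta1}(iv): the surjectivity $pr_{s_0}(\Sigma(F)) \twoheadrightarrow \Sigma(F')$ follows from (a), and $s_0 \in \Sigma(F)$ by construction, making the projection an inner one.

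The main obstacle I foresee lies in the fiber analysis of (i)--(ii): the ambient Grassmannian projection $pr_{(s_0,\Gamma)} : \GG(k,\PP(T)) \dashrightarrow \GG(k, \PP(T_0'))$ has generic fiber of dimension $(k+1)(l_0+1)$ (an $\Hom(\Lambda, \ker)$-worth), but the restriction to the rank-one Segre locus $\tilde{J}_0$ cuts this drastically, and pinning down the exact loss so that the squeezed dimensions coincide at $(k+1)(t-(k+1)(s+n-k-1)-k) - (k+1)(l_0-k)$ is the technical heart; under the maximality hypothesis this will effectively force $l_0 = k$ and control how jumping pairs of $F$ lift those of $F_0'$. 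The irreducibility claim in (iii)(b) is the other delicate point, since dimension alone cannot preclude reducibility and one genuinely needs the tangent-space maximality to prevent two components from meeting at a common jumping pair.
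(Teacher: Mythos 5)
Your upper bound for parts (i)--(ii) is correct and matches the paper: you correctly compute $\dim (T_0')^* = t - l_0 - 1$, view $F_0'$ as a reduced $(s-1, t-l_0-1)$-Steiner bundle, and apply Theorem~\ref{thm-tang} to recover exactly the stated formula. This is the same upper-bound argument the paper uses (Theorem~\ref{thm-tang} together with Proposition~\ref{proprieta1}). However, the matching lower bound is where you stop. You flag the ``fiber analysis'' of $pr_{(s_0,\Gamma)}|_{\tilde{J}_0}$ as the obstacle and assert that maximality ``will effectively force $l_0 = k$'' --- but this is false: the proposition's formula contains the term $-(k+1)(l_0-k)$ precisely because $l_0 > k$ is allowed, and in that case the projection genuinely drops dimension. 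The paper closes the gap with a different, purely geometric argument: if $\dim pr_{(s_0,\Gamma)}(\tilde{J}_0)$ were strictly below the target value, the extra fiber dimension would force $\tilde{J}(F)$ to be a cone; since $\tilde{J}(F)$ is smooth (a smooth complete intersection under the maximality hypothesis), a smooth cone is a linear space, which would have to lie inside a single Segre fiber $\pi_1^{-1}(s_0)$ or $\pi_2^{-1}(\Gamma)$ --- impossible by construction of the projection. This smoothness/cone step is the idea missing from your proposal, and no amount of ``pinning down the fiber'' replaces it without invoking smoothness.

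Part (iii)(b) is also handled differently. You argue that the tangent-space bound at a common point of two maximal components would be violated, so components are disjoint, and then invoke irreducibility of the base $\tilde{J}(F_0')$ plus connected linear fibers to force a single component. The paper instead constructs a line $L$ joining a point $s_1 \otimes \Gamma_1$ of a hypothetical second component to a point $\bar{s} \otimes \Gamma_1 \in \tilde{J}_0$ with the same projection; $L$ lies in $\tilde{J}(F)$ but not in $\tilde{J}_0$, so $\bar{s}\otimes\Gamma_1$ sits in two components, contradicting smoothness. Your route is plausible in spirit (both hinge on smoothness of $\tilde{J}(F)$), but you should verify, rather than assert, that the fibers of $pr_{(s_0,\Gamma)}$ on $\tilde{J}(F)$ are connected --- they are, since if $(s_1,\Gamma')$ and $(s_2,\Gamma')$ both lie in $\tilde{J}(F)$ then $\langle s_1, s_2\rangle \otimes \Gamma' \subset T_0^*$, so the entire line of first coordinates survives. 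Parts (iii)(a), (c), (d) agree with the paper.
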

\begin{proof}
From Theorem \ref{thm-tang} and Proposition \ref{proprieta1} (ii), we obtain that $$\dim pr_{(s_0,\Gamma)}(\tilde{J}_0) \leq (k+1)(t-(k+1)(s+n-k-1)-k) - (k+1)(l_0-k).$$ We need to check that the dimension always reaches the top value. Let us suppose that this never occurs, i.e. $$\dim pr_{(s_0,\Gamma)}(\tilde{J}_0) \leq (k+1)(t-(k+1)(s+n-k-1)-k) - (k+1)(l_0-k) - 1.$$ If this happens, then it means that $\tilde{J}(F)$ must be a cone and we know that it is smooth, so it must be a projective space and hence it is contained in one of the fibers of the general Segre variety. This leads to contradiction because we cannot be either in $\pi_1^{-1}(s_0)$ or in $\pi_2^{-1}(\Gamma)$; this proves part (i).\\ Part (ii) is proved considering the combination of the fact that $\tilde{J}_0 \subset \tilde{J}(F_0')$ and Theorem \ref{thm-tang}. \\Part (iii-a) comes directly by computing dimensions. To prove part b) suppose there exists another component $\tilde{J}_1$ different from $\tilde{J}_0$ and fix an element $s_1 \otimes \Gamma_1 \in \tilde{J}_1 \backslash \tilde{J}_0$. There must exist an element $\bar{s} \otimes \Gamma_1 \in \tilde{J}_0$ that has the same projection as the element fixed, so there is a line $L$ connecting the two points that meets in $\pi_1^{-1}(s_0)$; such line must be contained in $\tilde{J}(F)$. By hypothesis we know that $L \not\subset \tilde{J}_0$ so it must belong to another component, this would tell us that the point $\bar{s} \otimes \Gamma_1$ is singular, which is a contradiction. Part (iii-c) and (iii-d) come automatically by irreducibility and the commutativity of diagram (\ref{eq-induc}).
\end{proof}
Two consequences of what we just proved are
\begin{itemize}
\item $\Sigma(F)$ is a variety with minimal degree, i.e. it can be a rational normal scroll, the Veronese surface, a projective space or a cone over the previous three varieties;
\item $J(F) = J(\bar{F}_0')$ where by $\bar{F}_0'$ we denote the reduced summand of the induced bundle at the step $s=k+2$.
\end{itemize}
\subsection{The classification of bundles with maximal dimensional locus}
This final section contains the statement and the proof of our main result, i.e. the theorem that classifies the Steiner bundles on $\GG(k,n)$ whose jumping locus is maximal. Notice that such result includes the classification given by one of the authors in \cite{arr}, even if proved with a different technique. Observe also that all the bundles in the classification are Schwarzenberger.
\begin{Theorem}\label{thm-class}
Let $F$ be a reduced Steiner bundle on $\GG(k,n)$ for which $\dim \tilde{J}(F)$ is maximal; then we are in one of the following cases:
\begin{description}
\item[(i)] $F$ is the Schwarzenberger bundle given by the triple $(\PP^1,\OO_{\PP^1}(s-1),\OO_{\PP^1}(n))$. In this case $k=0$ and $t=n+s$.
\item[(ii)] $F$ is the Schwarzenberger bundle given by the triple $(\PP^1,E(-1),\OO_{\PP^1}(1))$, where \\$E = \oplus_{i=1}^{t-s}\OO_{\PP^1}(a_i)$ with $a_i \geq 1$. In this case $k=0$ and $n=1$.
\item[(iii)] $F$ is the Schwarzenberger bundle given by the triple $(\PP^{k+1}, \OO_{\PP^{k+1}}(1), E^\lor(-1))$, where $E$ is a Steiner bundle defined by the following exact sequence
$$
0 \longrightarrow E \longrightarrow H^0(\UU^\lor) \otimes \OO_{\PP(S)}(-1) \longrightarrow \frac{S^*\otimes H^0(\UU^\lor)}{T^*} \otimes \OO_{\PP(S)} \longrightarrow 0.
$$
In this case $s=k+2$.
\item[(iv)] $F$ is the Schwarzenberger bundle given by the triple $(\PP^2,\OO_{\PP^2}(1),\OO_{\PP^2}(1))$. In this case $k=0, n=2, s=3$ and $t=6$.
\end{description}
\end{Theorem}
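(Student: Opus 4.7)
The plan is to argue by induction on $s = \dim S$, with the base case $s = k+2$ provided by the theorem just proved, which yields case (iii). For the inductive step ($s > k+2$), I would fix a jumping pair $s_0 \otimes \Gamma \in \tilde{J}(F)$ and form the induced bundle $F'$ via diagram (\ref{eq-induc}); by Proposition \ref{proprieta2}(ii), its reduced summand $F_0'$ is a reduced Steiner bundle on $\GG(k,n)$ of type $(s',t')$ with $s' \leq s-1$, still having a jumping locus of maximal dimension. Iterating this construction until the parameter reaches $k+2$, one lands at a bundle $\bar F_0'$ covered by case (iii), and by the last bullet after Proposition \ref{proprieta2} the jumping-space set satisfies $J(F) = J(\bar F_0')$.

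The geometric invariant that controls the classification is $\Sigma(F) \subset \PP(S)$. The consequences listed after Proposition \ref{proprieta2} tell us that $\Sigma(F)$ is a variety of minimal degree and that the projection $pr_{s_0}$ from any point $s_0 \in \Sigma(F)$ sends $\Sigma(F)$ onto $\Sigma(F_0')$. I would therefore run a case analysis on the minimal-degree classification (linear subspaces, rational normal scrolls, the Veronese surface, and cones over these), in each case checking compatibility with the induction: at each induction step the projection must again produce a minimal-degree variety, and this strong constraint, combined with the smoothness of $\tilde J(F)$ and the tangent bound of Theorem \ref{thm-tang}, rules out most configurations.

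The outcome I expect is as follows. When $k \geq 1$, the only way to match the tangent-space bound with a minimal-degree $\Sigma(F)$ admitting the required chain of projections is that the induction terminates immediately, i.e.\ $s = k+2$, giving case (iii). When $k = 0$, $\Sigma(F) \subset \PP^{s-1}$ can be a rational normal curve, which reconstructs the Schwarzenberger triple $(\PP^1,\OO_{\PP^1}(s-1),\OO_{\PP^1}(n))$ of case (i); the degenerate situation $n = 1$ gives case (ii) directly via the structure of $E$ coming from the $s = 2$ step; and the Veronese surface in $\PP^5$ produces the sporadic case (iv) with $s=3,\ n=2,\ t=6$. In every case the Schwarzenberger triple $(X,L,M)$ is recovered by taking $X = \Sigma(F)$, with $L$ and $M$ read off from the two natural line bundles associated to the Segre realisation of $\tilde J(F)$ (via Lemma \ref{lem-varproj}).

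The main obstacle will be translating the purely projective-geometric information about $\Sigma(F)$ (and the analogous $J(F)$ via Remark \ref{rem-dual}) into the explicit Schwarzenberger triple, and in particular ruling out intermediate minimal-degree varieties that are not on the list. The delicate point is showing that non-trivial scrolls or cones are incompatible with preserving both the maximality of the tangent bound of Theorem \ref{thm-tang} along every projection step and the Segre factorisation $\tilde J(F) = \im\nu \cap \GG(k,\PP(T_0))$ of Lemma \ref{lem-varproj}; this is where smoothness of $\tilde J(F)$ from Proposition \ref{proprieta2}(i) is used crucially to avoid cones. The remaining bookkeeping of numerical invariants $t$ and $n$ in each case is then straightforward.
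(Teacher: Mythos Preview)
Your inductive strategy, using the $s=k+2$ base case and the induced bundle $F_0'$ at each step, is exactly the one the paper follows, and the minimal-degree property of $\Sigma(F)$ is indeed the controlling invariant. So the broad architecture is right.

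The paper, however, does not organise the case analysis by running through the Enriques list of minimal-degree varieties for $\Sigma(F)$. Its primary dichotomy is the comparison $\dim\tilde J(F)$ versus $\dim\Sigma(F)$, i.e.\ whether the projection $\pi_1:\tilde J(F)\to\Sigma(F)$ has positive-dimensional generic fibre or is birational, and then whether this birationality persists along the chain of inductions. When the fibre is positive-dimensional one shows $\Sigma(F)=\PP(S)$ and bounds the fibre dimension (for $k=0$ this forces $n=1$ unless trivial); when $\pi_1$ is birational and stays so down to $s=2$ one gets the rational normal curve and case~(i); when birationality breaks at some step, a careful analysis of $\tilde J(F)$ (not $\Sigma(F)$) as a projectivised bundle over $\PP^1$ forces $n=1$ and yields case~(ii), while at $s=3$ the surviving possibility is the Veronese surface, case~(iv). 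For $k\ge1$ the fibre of $\pi_1'$ can only be $0$- or $(\ge k+1)$-dimensional, which kills all intermediate steps and collapses to $s=k+2$. This fibre-dimension bookkeeping is more efficient than your proposed direct case split, because it tells you immediately how the picture changes from $F$ to $F_0'$, whereas knowing only the minimal-degree type of $\Sigma(F)$ does not by itself determine that of $\Sigma(F_0')$.

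Two specific corrections. First, Lemma~\ref{lem-varproj} gives the Schwarzenberger triple with $X=\tilde J(F)$, not $X=\Sigma(F)$; these agree only when $\pi_1$ is an isomorphism, and in case~(ii) the relevant $\PP^1$ is the one underlying $J(F)$, not $\Sigma(F)$. Second, the smoothness argument is applied to $\tilde J(F)$ (a smooth cone must be a linear space, hence lie in a single Segre fibre, which is impossible), not to $\Sigma(F)$; so ``ruling out cones'' is about $\tilde J(F)$, and is used to show the projection $pr_{(s_0,\Gamma)}$ cannot drop dimension by more than expected.
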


Recall the induction construction we showed in the previous section, that gives us the following commutative diagram, essential for the classification. During this section we will consider all of our sets as projective varieties. Let $F$ be a reduced Steiner bundle over $\GG(k,n)$, then we have
\begin{equation}\label{diag-com}
\xymatrix{
& \tilde{J}(F) \ar[rd]^{\pi_2} \ar[dl]_{\pi_1} \ar@{-->}[d] \\
\Sigma(F) \ar@{-->}[d] _{pr_l} & \tilde{J}(F_0') \ar[dr]^{\pi_2'} \ar[dl]_{\pi_1'} & J(F) \ar@{=}[d]^{pr_r}\\
\Sigma(F_0') & & J(F_0')
}
\end{equation}
where as usual $F_0'$ denotes the reduced summand of the induced bundle obtained fixing a jumping pair $\Lambda = s_0 \otimes \Gamma$. Notice that $\dim \Sigma(F) = \dim \tilde{J}(F_0')$, because of Proposition \ref{proprieta2}, and we also have that $\dim \Sigma(F_0') \leq \dim \Sigma(F) \leq \dim \Sigma (F_0') +1 $ because we proved that $pr_l$ is a projection from an inner point.\\
Combining these two relations, we get that the fiber of the projection $\pi_1': \tilde{J}(F_0') \longrightarrow \Sigma(F_0')$ has dimension at most one, but we know that the dimension of the fiber of such projection for a Steiner bundle has dimension either zero or is greater equal than $k+1$. This means that we need to divide the two cases $k=0$ and $k\geq 1$. A further division is given focusing on $\tilde{J}(F)$ and $\Sigma(F)$ and observe that we have two possibilities: $\dim \Sigma(F) = \dim \tilde{J}(F)$ or $\dim \Sigma(F) < \dim \tilde{J}(F)$. Let us study the several cases we have pointed out. Each case will give us a Schwarzenberger bundle, because of Lemma \ref{lem-varproj}.\\
\subsection*{The case of the projective space $k=0$}{\}\\
\textbf{Case $\dim \Sigma(F) < \dim \tilde{J}(F)$}\\
Supposing this inequality means that for every $s_0$ in $\Sigma(F)$ we have $\dim (\pi_1^{-1}(s_0)) \geq 1$.\\ Let us suppose that $\dim \Sigma (F) = \dim \Sigma (F_0')$, which implies that $\tilde{J}(F_0')$ is birational to $\Sigma(F_0')$. In this setting fix an element $\bar{s} \in \Sigma(F)$ such that $0 \neq pr_l(\bar{s}) \in \Sigma(F_0')$. By hypothesis there exist at least two points $\bar{s} \otimes v_1$ and $\bar{s}\otimes v_2$ which represent independent jumping pairs, so $v_1,v_2 \in J(F)$. By Proposition \ref{proprieta1} (ii) we get the commutativity of the projections, so by one side we know that $pr_l(\bar{s}) \otimes v_1$ and $pr_l(\bar{s}) \otimes v_2$ belong to $\tilde{J}(F_0')$ and they are independent; this leads to contradiction because from the other side we know that the generic point $pr_l(s_0) \in \Sigma(F_0')$ has only one preimage.\\
Hence we get that it must be $\dim \Sigma(F) = \dim \Sigma(F_0')+1$, which means that we are still in the case $\dim \tilde{J}(F_0') > \dim \Sigma(F_0')$ and we can iterate the induction until the step $s=1$ that gives us $\Sigma = \PP^1$. We discovered that if the fiber of $\pi_1$ has positive dimension, then it is also true for the induced bundle; this allows us to state that if $\dim \tilde{J}(F) > \dim \Sigma(F)$ then $\Sigma(F) = \PP(S)$.\\ 
We would like to exclude the case when the dimension of the fiber is greater or equal than two; in order to do so, we have the following theorem.
\begin{Theorem}
If $\dim(\pi_1^{-1}(s_0)) \geq 2$ for every $s_0 \in \Sigma_F$, then $\tilde{J}(F) \simeq \PP(S) \otimes \PP^n$.
\end{Theorem}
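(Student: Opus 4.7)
The plan is a descending induction on $s=\dim S$, with base case $s=2$ treated by the structural theorem on $(k{+}2,t)$-Steiner bundles proved earlier in Section~\ref{sec-class}. By the paragraph preceding the statement, $\Sigma(F)=\PP(S)$; the hypothesis thus reads $\dim E_{[s_0]}\geq 3$ at every $[s_0]\in\PP(S)$, where $E_{[s_0]}=\{v\in H^0(\UU^\lor):s_0\otimes v\in T^*\}$, and by Lemma~\ref{lem-varproj}(i) the desired conclusion $\tilde{J}(F)\simeq\PP(S)\times\PP^n$ amounts to $T^*=S^*\otimes H^0(\UU^\lor)$.

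For the inductive step ($s\geq 3$) fix a jumping pair $(s_0,\Gamma)$ and form via diagram~(\ref{eq-induc}) the reduced Steiner bundle $F_0'$ of type $(s-1,\cdot)$. Propositions~\ref{proprieta1}(ii) and \ref{proprieta2}(iii-d) give $\Sigma(F_0')=pr_{s_0}(\PP(S))=\PP^{s-2}$, and for each $\bar s\in\Sigma(F_0')$ any preimage $[s_1]\in\PP(S)\setminus\{[s_0]\}$ yields an injection $\pi_1^{-1}([s_1])\hookrightarrow(\pi_1')^{-1}(\bar s)$ sending $(s_1,\bar\Gamma)$ to $(\bar s,\bar\Gamma)$, so $\dim(\pi_1')^{-1}(\bar s)\geq\dim\pi_1^{-1}([s_1])\geq 2$. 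The inductive hypothesis applied to $F_0'$ then produces $\tilde{J}(F_0')=\PP^{s-2}\times\PP^n$, equivalently the induced map $\varphi'$ is surjective, i.e.\ $T^*\twoheadrightarrow(S^*/\langle s_0\rangle)\otimes H^0(\UU^\lor)$ for every $[s_0]\in\PP(S)$. This forces $\dim E_{[s_0]}=t-(s-1)(n+1)$ to be constant, so the $E_{[s_0]}$ assemble into a locally free subsheaf of $H^0(\UU^\lor)\otimes\OO_{\PP(S)}(-1)$; combined with the base-case statement applied fibrewise along a generic $\PP^1\subset\PP(S)$, this rank is pinned to $n+1$, giving $T^*=S^*\otimes H^0(\UU^\lor)$ and the conclusion.

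In the base case $s=2$, by the structural theorem $F$ is the Schwarzenberger bundle of $(\PP^1,\OO_{\PP^1}(1),E^\lor(-1))$, so $\tilde{J}(F)=\PP(E)\subset\PP^1\times\PP^n$ with $E=\bigoplus_i\OO_{\PP^1}(a_i)\hookrightarrow\OO_{\PP^1}(-1)^{n+1}$, $a_i\leq -1$, $\sum a_i=-(n+1)$; the hypothesis $\dim\pi_1^{-1}([s_0])\geq 2$ forces $\rk E\geq 3$. I would rule out unbalanced splittings (some $a_i<-1$) via the cone obstruction used in the proof of Proposition~\ref{proprieta2}(i): such a splitting exhibits $\PP(E)$ as a non-trivial projective bundle, producing a cone locus in the ambient $\PP^1\times\PP^n$ that contradicts the smoothness of $\tilde{J}(F)$. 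Hence all $a_i=-1$, $r=n+1$, $E=\OO_{\PP^1}(-1)^{n+1}$, and $\tilde{J}(F)=\PP^1\times\PP^n$.

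The main obstacle lies in the base case: excluding proper subbundles $E\subsetneq\OO_{\PP^1}(-1)^{n+1}$ of rank $\geq 3$. The cone-smoothness argument is the natural tool, but making it airtight requires controlling how strict inclusions $E_{[s_0]}\subsetneq H^0(\UU^\lor)$ obstruct $\tilde{J}(F)$ from being smooth of dimension $s+n-1$; a numerical substitute via Theorem~\ref{thm-tang} together with the constant-fibre-dimension consequence of the hypothesis may be cleaner and is my fallback.
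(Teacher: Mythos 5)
Your proposal runs a full induction on $s=\dim S$ down to $s=2$, which is a genuinely different route from the paper: the paper performs a \emph{single} induction step, uses Proposition~\ref{proprieta2} to compute that the generic fibre of $\pi_1'$ has dimension one, and then derives a contradiction because, by commutativity of diagram~(\ref{diag-com}) and the injectivity of $pr_{(s_0,\Gamma)}$ on each $\pi_1^{-1}(s_1)$, every fibre of $\pi_1'$ would have to have dimension $\geq 2$. Your inductive step correctly reaches the conclusion that $T^*+(\langle s_0\rangle\otimes H^0(\UU^\lor))=S^*\otimes H^0(\UU^\lor)$ for every $[s_0]$, hence $\dim E_{[s_0]}$ is constant, but the passage from there to $T^*=S^*\otimes H^0(\UU^\lor)$ does not follow, and the ``base case applied fibrewise along a generic $\PP^1\subset\PP(S)$'' device is not available: the linear slice $T^*\cap (S_\ell^*\otimes H^0(\UU^\lor))$ need not satisfy the Steiner condition, need not be reduced, and need not have a jumping locus of maximal dimension, so the $s=2$ classification cannot be invoked along $\ell$. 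The numerical fallback you yourself name is the correct fix, and it is essentially what the paper's one-line proof encodes: $\Sigma(F)=\PP(S)$, constant fibre dimension $t-(s-1)(n+1)-1$, and maximality $\dim\tilde{J}(F)=t-(s+n-1)$ from Theorem~\ref{thm-tang} together force $(s-2)(n-1)=0$; for $s\geq 3$ this leaves only $n=1$, which is incompatible with fibres of $\pi_1$ of dimension $\geq 2$ inside $\PP^n$. That contradiction is the actual content of the step, and stating it explicitly would replace both the sheafification and the $\PP^1$-slicing.

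The base case as written is also not sound, and this is where the more serious gap lies. First, the normalization is off: from the paper's $E^\lor(-1)=\bigoplus\OO_{\PP^1}(a_i)$ with $a_i\geq 1$ one gets summands of $E$ of degree $\leq -2$, so $E\neq\OO_{\PP^1}(-1)^{n+1}$ is the generic, not the pathological, situation, and the ``cone/smoothness'' obstruction is not a precise argument — nontrivial $\PP^{r}$-bundles over $\PP^1$ are smooth and are not cones. Second, the conclusion you aim for simply fails at $s=2$: taking $T^*=\ker\ell$ for a single $\ell\in S\otimes H^0(\UU^\lor)^*$ of rank $2$, with $n\geq 3$, yields a reduced Steiner bundle whose jumping locus has the maximal dimension $t-(n+1)$, with $\Sigma(F)=\PP^1$ and every fibre of $\pi_1$ a $\PP^{n-2}$ of dimension $\geq 2$, yet $\tilde{J}(F)$ is a proper $\PP^{n-2}$-bundle over $\PP^1$ and not $\PP^1\times\PP^n$. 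In the paper the entire $s=k+2$ stratum is resolved by the earlier classification theorem, and the statement under review is only ever applied for $s\geq 3$, where (as the $(s-2)(n-1)=0$ computation shows) the hypothesis is in fact vacuous; your induction should terminate with that contradiction for $s\geq 3$ rather than bottom out at a base case whose claimed conclusion is false.
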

\begin{proof}
Notice that, looking at the diagram (\ref{diag-com}), the induced bundle $F_0'$ will give us generic fiber, of the morphism $\pi_1'$, of dimension one, so the only possible case not to get a contradiction, because of the commutativity of the diagram, is the trivial one.
\end{proof}
\noindent\textbf{Case $\dim \Sigma(F) = \dim \tilde{J}(F)$} { \ } \\ 
In this case we have that $\tilde{J}(F)$ is birational to $\tilde{J}(F_0')$. We now need to distinguish the case where the birationality is conserved throughout the induction or else if we have one step where $\dim \Sigma(F) = \dim \Sigma (F_0') + 1$, when we can recover the case we studied before.\\
If the birationality is conserved, then we can arrive at the step $s=1$, where we know that $\Sigma = \PP^1$, so all the left projections are isomorphisms, because they are birational maps between rational normal curves, and we get that $\Sigma(F) \subset \PP(S)$ is nothing more that the rational normal curve. This allows us to state that in this situation is enough to study the step $s=1$ and we obtain the triple $(\PP^1,\OO_{\PP^1}(s-1),\OO_{\PP^1}(n))$. \\
For what we have just proved, we can, without loss of generality, consider the situation where we start with $\tilde{J}(F)$ birational to $\Sigma(F)$ and the birationality is broken in the first step of the induction; we already know that the fiber will be of positive dimension in every further step. Let us take a look at the following diagram that explains better in what setting we are (close to the varieties we will see their dimension, close to the arrows we will denote the birationality or the positivity of the dimension of the fiber).
$$
\xymatrix{
 && \tilde{J}(F)^{s-1} \ar[dl]_{bir} \ar@{-->}[d]^{bir} \\
\mbox{step}\: s & \PP^{s-1} \ar@{-->}[d] & \tilde{J}(F_0') \ar[dl]_{+1} \ar@{-->}[dd] \\
\mbox{step}\: s-1 & \PP^{s-2} \ar@{-->}[dd] \\
& & \tilde{J}(\bar{F}) \ar[dl]_{+1}\\
\mbox{step}\: 2 & \PP^1
}
$$
Observe that $\dim \tilde{J}(F_0)$ lowers by one at each step of the induction, so we have that $\dim \tilde{J}(\bar{F}) =2$ and this means that we can relate it, as the projectivization, to a rank 2 vector bundle $\OO_{\PP^1}(a) \oplus \OO_{\PP^1}(b)$ with $a + b =n+1$, the degree of the variety.\\
If one between $a$ or $b$ is greater or equal than 2, then we could relate $\tilde{J}(F_0')$ to a vector bundle of type $\OO_{\PP^1}(a) \oplus \OO_{\PP^1}(b) \bigoplus_i \OO_{\PP^1}(c_i)$, with $c_i \geq 1$. This would have allowed us to find another projection of $\tilde{J}(F_0')$ whose image is birational to $\tilde{J}(F_0')$ itself, which, however, it is not possible by our hyphotesis. Hence we get $a = b = 1$ which means that the only possible case is given by $n=1$, so $\tilde{J}(\bar{F})$ is associated to $\OO_{\PP^1}(1) \oplus \OO_{\PP^1}(1)$ and $\tilde{J}(F_0')$ must be in correspondence with the bundle $\bigoplus^{s} \OO_{\PP^1}(1)$. The last birational step (and every other eventual birational step) only increases by one the degree of one of the bundle summands. At the end we always get a relation with a $\bigoplus^{s}_{i=1} \OO_{\PP^1}(a_i)$, with $a_i \geq 1$ and we obtain the triple defined in (ii). Notice that in order to have the situation we just described we need to ask for a starting point $s \geq 4$.\\ Let us deal now with the case $s=3$. Such case is the one represented by the following diagram
$$
\xymatrix{
& \tilde{J}(F) \ar[rd]^{\pi_2} \ar[dl]_{bir} \ar@{-->}[d]^{bir} \\
\PP^2 \ar@{-->}[d] _{pr_l} & \tilde{J}(F_0') \ar[dr]^{\pi_2'} \ar[dl]_{\pi_1'} & J(F) \ar@{=}[d]^{pr_r}\\
\PP^1 & & J(F_0')
}
$$
Recalling the classification of the case $s=2$, we must have that $\tilde{J}(F_0')$ is associated to the bundle $\OO_{\PP^1}(a) \oplus \OO_{\PP^1}(b)$, with $a+b = n+1$. Being $\tilde{J}(F_0')$ a projection from an inner point of $\tilde{J}(F)$, we get that $\tilde{J}(F)$ can either be a rational normal scroll or the Veronese surface, in the special case $a=2$ and $b=1$ or viceversa, or the Hirzebruch surface. We exclude this last case because its projection would give us the trivial case $\PP^1 \times \PP^1$, which we have already studied. We can also exclude a rational normal scroll, because being a minimal surface, we would have that the projection on $\PP^2$ given in the diagram is an isomorphism, which of course leads to a contradiction. The only case left is the Veronese surface and we can conclude that $F$ is the Schwarzenberger bundle given by the triple $(\PP^2, \OO_{\PP^2}(1), \OO_{\PP^2}(1))$. Notice that, in this particular case, $\tilde{J}(F_0')$ is a cubic surface in $\PP^4$
\subsection*{The Grassmannian case $k\geq 1$}
Notice that in this case it is impossible to have $\dim \Sigma(F) = \dim \Sigma (F_0') +1$, or else we would obtain a morphism $\pi_1': \tilde{J}(F_0') \longrightarrow \Sigma(F_0')$ with 1-dimensional fiber. We already observed that the fiber can have dimension 0 or else dimension greater or equal than $k+1$ and we would get a contradiction. So if we have $\dim \tilde{J}(F) > \dim \Sigma(F)$ then the only possible case is the trivial one, i.e. when $\tilde{J}(F) = \PP(S) \times \GG(k,n)$.\\On the other hand, if $\dim \tilde{J}(F) = \dim \Sigma(F)$, this birational relation also remains in the subsequent steps of the induction.\\
Let us focus now on the last step of the induction, i.e. considering the case $s=k+3$ and $s-1=k+2$ and let us suppose that both $\tilde{J}(F)$ is birational to $\Sigma(F)$ and $\tilde{J}(F_0')$ is birational to $\Sigma(F_0')$. In order to do the induction we can manage to take a jumping pair $s_0 \otimes \Gamma \in \tilde{J}(F)$ that is the unique point in the fiber $\pi_1^{-1}(s_0)$.\\ Recall the induction diagram (\ref{diag-com}), in this particular case we have that $\tilde{J}(F_0') \simeq \Sigma(F_0') \simeq \PP^{k+1}$, because of the fact that, by the given classification $\tilde{J}(F_0')$ is the related to $G(k+1,E)$, where $\rk E = k+1$. We are in perfect situation in order to state the following result.
\begin{Lemma}\label{lem-isofiber}
Let $F$ be a Steiner bundle on $\GG(k,n)$ and $\tilde{J}(F)$ its jumping locus. Suppose that $\tilde{J}(F)$ is birational to $\Sigma(F)$, and, fixed a jumping pair $s_0 \otimes \Gamma_0$, consider the first step of the induction (as described in Diagram (\ref{diag-com})). If the morphism $\pi_1'$ is an isomorphism, then also $\pi_1$ will be an isomorphism.
\end{Lemma}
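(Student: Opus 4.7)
The plan is to establish that $\pi_1 : \tilde{J}(F) \to \Sigma(F)$ is bijective on points and then conclude via a standard consequence of Zariski's Main Theorem: a proper birational morphism from a smooth irreducible variety onto a normal irreducible variety that is bijective is an isomorphism. That $\tilde{J}(F)$ is smooth at the relevant jumping points follows from the match between its actual and expected dimensions supplied by Theorem \ref{thm-tang}, and $\Sigma(F)$ is normal because, as noted after Proposition \ref{proprieta2}, it is a variety of minimal degree (a projective space, a rational normal scroll, the Veronese surface, or a cone over one of these).

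From the paragraph preceding the lemma, we may moreover assume that the fixed jumping pair $s_0 \otimes \Gamma_0$ has been chosen so that $\pi_1^{-1}(s_0) = \{s_0 \otimes \Gamma_0\}$ is already a singleton: by birationality of $\pi_1$, this is the case for a generic point of $\Sigma(F)$. The whole task therefore reduces to checking that $\pi_1^{-1}(s)$ is a single point for every $s \in \Sigma(F)$ with $s \neq s_0$.

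The verification of this last statement is where the hypothesis that $\pi_1'$ is an isomorphism enters decisively. Suppose that $s \otimes \bar{\Gamma}_1$ and $s \otimes \bar{\Gamma}_2$ are two jumping pairs of $F$ sharing first component $s \neq s_0$. By Proposition \ref{proprieta1}(ii) we have $pr_{(s_0,\Gamma_0)}(s \otimes \bar{\Gamma}_i) = pr_{s_0}(s) \otimes \bar{\Gamma}_i$ for $i=1,2$, and Proposition \ref{proprieta1}(iii) places both images in $\tilde{J}(F_0')$ (applying it to the single irreducible component of $\tilde{J}(F)$, whose irreducibility is guaranteed by Proposition \ref{proprieta2}(iii-b) since $\tilde{J}(F_0') \simeq \Sigma(F_0')$ is irreducible by the present hypothesis on $\pi_1'$). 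Because $\pi_1'$ is an isomorphism, the preimage of $pr_{s_0}(s)$ in $\tilde{J}(F_0')$ is a single point, forcing $\bar{\Gamma}_1 = \bar{\Gamma}_2$.

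The main conceptual concern in this approach is that $pr_{(s_0,\Gamma_0)}$ is undefined on $\pi_1^{-1}(s_0)$, so the argument above gives no direct control over the fiber over $s_0$; this is precisely why the generic choice of $s_0$ recalled in the second paragraph is essential. Once bijectivity of $\pi_1$ is established, its properness (being the restriction to a closed subvariety of the projection $\PP(S^*) \times \GG(k,n) \to \PP(S^*)$), together with the smoothness of $\tilde{J}(F)$ and the normality of $\Sigma(F)$, yields the conclusion through Zariski's Main Theorem.
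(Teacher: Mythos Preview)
Your argument is correct and follows the same route as the paper's proof: both show that the fiber of $\pi_1$ over any $s\neq s_0$ is a singleton by pushing two hypothetical jumping pairs through $pr_{(s_0,\Gamma_0)}$ and contradicting the injectivity of $\pi_1'$. You are simply more explicit than the paper in handling the fiber over $s_0$ via the generic choice and in upgrading bijectivity to an isomorphism through Zariski's Main Theorem (using smoothness of $\tilde{J}(F)$ from Theorem~\ref{thm-tang} and normality of $\Sigma(F)$), steps the paper leaves implicit.
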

\begin{proof}
Notice that for the generic $s_0 \in \Sigma(F)$, we have a unique associated jumping pair $s_0 \otimes \Gamma_0$. 
Suppose that $\pi_1$ is not an isomorphism, so we can find an element $s_1\in \Sigma(F)$, with $s_1 \neq s_0$, associated with two independent jumping pairs $s_1 \otimes \Gamma_1$ and $s_1 \otimes \Gamma_2$. A consequence of this fact is that $pr_c(s_1 \otimes \Gamma_1) = pr_l(s_1) \otimes \Gamma_1$ and $pr_c(s_1 \otimes \Gamma_2) = pr_l(s_1) \otimes \Gamma_2$ are independent jumping pairs belonging to $\tilde{J}(F_0')$, associated with the non zero point $pr_l(s_1) \in \Sigma(F_0')$. This leads to contradiction because we supposed $\pi_1'$ to be an isomorphism.
\end{proof}
Due to the lemma we get that $\tilde{J}(F) \simeq \Sigma(F)$ and, by Proposition \ref{proprieta2} and its consequences, we know that $\Sigma(F) \simeq Q \subset \PP^{k+2}$, where $Q$ is the $(k+1)$-dimensional quadric in the projective space. Moreover we know that $J(F) = J(F_0') \simeq \PP^{k+1}$, which tells us that the morphism $\pi_2'$ is generically finite and the generic fiber consists of one point. It is possible to prove that $\pi_2'$ is an isomorphism.\\ 
Recalling diagram (\ref{diag-com}), which commutes, we are in the situation where $\pi_2$ is a morphism, $\pi_2'$ is an isomorphism, while $pr_c$ is not a morphism, because it is a projection from an inner point. This leads to a contradiction and we obtain that the only possible case is the one where $s=k+2$, which we have already classified.\\
We have thus completed the classification, having considered all the possible cases.

\bibliographystyle{alpha}
\bibliography{biblioarttesis}

\begin{thebibliography}{ACGH85}

\bibitem[ACGH85]{acgh}
E.~Arbarello, M.~Cornalba, P.A. Griffiths, and J.~Harris.
\newblock {\em Geometry of {A}lgebraic {C}urves, {V}olume I}.
\newblock Springer-Verlag, 1985.

\bibitem[AO01]{anc-ott}
V.~Ancona and G.~Ottaviani.
\newblock Unstable hyperplanes for {S}teiner bundles and multidimensional
  matrices.
\newblock {\em Adv. Geom.}, 1:165--192, 2001.

\bibitem[Arr10]{arr}
E.~Arrondo.
\newblock Schwarzenberger bundles of arbitrary rank on the projective space.
\newblock {\em J. of Lond. Math. Soc.}, 82:697--716, 2010.

\bibitem[DK93]{dolg-kap}
I.~Dolgachev and M.~Kapranov.
\newblock Arrangements of hyperplanes and vector bundles on $\mathbb{P}^n$.
\newblock {\em Duke Math. J.}, 71(3):633--664, 1993.

\bibitem[Mar12]{mar-tesi}
S.~Marchesi.
\newblock {\em Jumping spaces in {S}teiner bundles}.
\newblock PhD thesis, Università degli Studi di Milano, Universidad Complutense
  de Madrid, 2012.

\bibitem[MRS09]{soar-miro}
R.M. Mir\'{o}-Roig and H.~Soares.
\newblock Cohomological characterisation of {S}teiner bundles.
\newblock {\em Forum math.}, 21:871--891, 2009.

\bibitem[Sch61]{schw}
R.L.E Schwarzenberger.
\newblock Vector bundles on the projective plane.
\newblock {\em Proc. London Math. Soc.}, 11:633--640, 1961.

\bibitem[Val00a]{val2}
J.~Vall\`{e}s.
\newblock Fibr\'{e}s de {S}chwarzenberger et fibr\'{e}s logarithmiques
  g\'{e}n\'{e}ralis\'{e}s.
\newblock {\em Bull. Soc. Math. France}, 128:433--449, 2000.

\bibitem[Val00b]{val1}
J.~Vall\`{e}s.
\newblock Nombre maximal d'hyperplans instables pour un fibr\'{e} de {S}teiner.
\newblock {\em Math. Z.}, 233:507--514, 2000.

\end{thebibliography}
\end{document}